\setlist[enumerate]{label=\normalfont{(\roman*)}}
\newtheorem{theorem}{Theorem}[section]
\newtheorem{proposition}[theorem]{Proposition}
\newtheorem{remark}[theorem]{Remark}
\newtheorem{corollary}[theorem]{Corollary}
\newtheorem*{easytheorem}{Theorem \ref{easytheorem}}
\newtheorem*{maintheorem}{Theorem \ref{maintheorem}}
\DeclareMathSymbol{\wtilde}{\mathord}{largesymbols}{"65}
\newcommand\lowerwtilde{%
  \text{\smash{\raisebox{-1.3ex}{%
    $\wtilde$}}}}
\DeclareRobustCommand\newtilde[1]{%
  \mathchoice
    {\accentset{\displaystyle\lowerwtilde}{#1}}
    {\accentset{\textstyle\lowerwtilde}{#1}}
    {\accentset{\scriptstyle\lowerwtilde}{#1}}
    {\accentset{\scriptscriptstyle\lowerwtilde}{#1}}
    }
\theoremstyle{definition}
\newtheorem{definition}[theorem]{Definition}
\newtheorem{construction}[theorem]{Construction}
\newtheorem*{notation}{Notation}
\newcommand{\Z}{\mathbb{Z}}
\let\int\relax
\newcommand{\int}{\mathring}
\newcommand{\pt}{\mathrm{pt}}
\newcommand{\boundary}{\partial}
\title[A concordance analogue of the $4$-dimensional light bulb theorem]{A concordance analogue of the $4$-dimensional light bulb theorem}
\author[Maggie Miller]{Maggie Miller}
\address{Department of Mathematics, Princeton University, Princeton, NJ 08544}
\email[Maggie Miller]{maggiem@math.princeton.edu}
\urladdr{\url{https://web.math.princeton.edu/~maggiem/}}
\begin{document}
\maketitle
\begin{abstract}

We prove a concordance analogue of Gabai's $4$-dimensional light bulb theorem. That is, we show that when $R$ and $R'$ are homotopically embedded $2$-spheres in a $4$-manifold $X^4$ where $\pi_1(X^4)$ has no $2$-torsion and {\emph{one}} of $R$ or $R'$  has a transverse sphere, then $R$ and $R'$ are concordant. When $\pi_1(X^4)$ has $2$-torsion, we give a similar statement with extra hypotheses as in the $4$-dimensional light bulb theorem. We also give similar statements when $R$ and $R'$ are orientable positive-genus surfaces.

\end{abstract}
\setcounter{tocdepth}{2}
\setcounter{equation}{0}
\setcounter{section}{0}


%
%
%
%

\section{Introduction}

In this paper, we will study surfaces smoothly embedded in $4$-manifolds. For ease of notation, we will always work in the smooth category -- ``embedding'' or ``immersion'' should be taken to mean ``smooth embedding,'' or ``smooth immersion,'' respectively. Similarly,``isotopy''  or ``homotopy'' should be taken to mean ``smooth ambient isotopy,'' or ``smooth ambient homotopy,'' respectively.

We will prove an analogue of Gabai's recent 4-dimensional light bulb theorem \cite{gabai} in the setting of concordance.  We discuss the 4-dimensional light bulb theorem in length in Section \ref{4dsection}; for now it suffices to say that the 4-dimensional light bulb theorem regards when homotopic surfaces embedded in a $4$-manifold are actually isotopic (given some other hypotheses).

\begin{definition}[Concordance]
Let $M^m$ and $N^m$ be $m$-dimensional submanifolds of $X^n$. We say that $M^m$ and $N^m$ are {\emph{concordant}} if there exists an $(m+1)$-dimensional submanifold $H$ of $X^n\times I$ so that $H\cap(X^n\times 0)=M^m$, $H\cap(X^n\times 1)=N^m$, and $H\cong M^m\times I$. We call $H$ a {\emph{concordance}} from $M^m$ to $N^m$.
\end{definition}

The author of this paper previously wrote a note showing how the 4-dimensional light bulb theorem in $S^2\times S^2$ can be used to construct concordances in $S^2\times S^2$ \cite{otherpaper}. That is, how to construct a concordance from $R$ to $R'$ when $R, R'$ are genus-$g$ surfaces in the homology class $[S^2\times\pt]$ in $S^2\times S^2$. In this setting, the existence of such a concordance is already known from the following theorem of Sunukjian \cite{sunukjian}.

\begin{theorem}[{\cite[Thm. 6.1]{sunukjian}}]\label{sunukjianthm}
Let $X^4$ be a simply connected $4$-manifold. Then surfaces $S, S'$ in $X^4$ are concordant if and only if they have the same genus and $[S]=[S']$ in $H_2(X^4)$.
\end{theorem}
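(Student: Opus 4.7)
The ``only if'' direction is immediate, since a concordance is by definition a product $S \times I$, which preserves both genus and homology class. For the ``if'' direction, the plan is to first construct any embedded $3$-dimensional cobordism $W \subset X \times I$ with $\partial W = S \sqcup S'$, and then modify $W$ via ambient surgery until it becomes diffeomorphic to $S \times I$.

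For the first step, since $[S] = [S']$ in $H_2(X^4)$, the class $[S \sqcup (-S')] \in H_2(X \times \partial I)$ maps to zero in $H_2(X \times I)$. The long exact sequence of the pair $(X \times I,\, X \times \partial I)$ therefore produces a relative class in $H_3(X \times I,\, X \times \partial I)$ whose boundary is $[S \sqcup (-S')]$. A standard transversality argument in the $5$-manifold $X \times I$ realizes this relative class by an embedded compact oriented $3$-manifold $W$ with $\partial W = S \sqcup S'$ (plus possibly closed components, which can be discarded or tubed in). After additional tubing, I may further assume $W$ is connected.

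For the second step, equip $W$ with a relative handle decomposition based at $S \times \{0\}$. A $1$-handle can be eliminated by ambient surgery on the circle $\gamma \subset W$ that is its cocore: because $\pi_1(X \times I) = 1$, $\gamma$ bounds an immersed $2$-disk in $X \times I$, which by general position (a $2$-disk in a $5$-manifold generically has no self-intersections) may be taken embedded; and because $X \times I$ is an orientable $5$-manifold, the normal bundle of this disk is trivial, so I may tube $W$ through a neighborhood of it. A dual move disposes of $2$-handles. Once the handles have been cancelled in pairs, $W$ is a product cobordism, hence diffeomorphic to $S \times I$, yielding the concordance.

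The main obstacle is the bookkeeping of framings and intersections: the ambient surgery on $\gamma$ produces an honest handle cancellation only for an appropriate framing of the bounding disk $D$, and $D$ must be arranged disjoint from the handles it is not intended to cancel. Simple connectivity of $X$ provides the maneuvering room in the $5$-dimensional ambient manifold to slide disks off one another and to adjust framings by boundary twists, which is precisely where the hypothesis $\pi_1(X)=1$ is used. I expect this step to be the delicate one; everything else is essentially formal.
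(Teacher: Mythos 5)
This paper does not contain a proof of this statement; it is quoted from Sunukjian and used as a black box, so there is no ``paper's own proof'' to compare against. That said, your outline is essentially the approach Sunukjian takes: produce an embedded $3$-manifold $W\subset X\times I$ with $\partial W = S\sqcup S'$ via the classifying map to $\mathbb{CP}^N$ dual to the common class, then perform ambient surgery in $X\times I$ to promote $W$ to a product.

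There is, however, a concrete error in the surgery step. You write that a $1$-handle of $W$ (rel $S\times\{0\}$) ``can be eliminated by ambient surgery on the circle $\gamma\subset W$ that is its cocore.'' The cocore of a $1$-handle in a $3$-dimensional cobordism is a $2$-disk, not a circle; presumably you mean its boundary, the belt circle. But the belt circle bounds the cocore disk in $W$, so it is nullhomotopic in $W$, and surgery on a nullhomotopic circle in a $3$-manifold does not cancel the $1$-handle --- with the natural framing it connect-sums on an $S^1\times S^2$, \emph{increasing} $b_1$. The circle you should surger is one dual to the $1$-handle, i.e.\ a loop running once over the handle (the core arc closed up through $S$, pushed into $\mathrm{int}\,W$); such a loop represents the new generator of $\pi_1(W)$ relative to $\pi_1(S)$, and surgering it is what kills the handle. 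Beyond this, the gaps you flag at the end are genuine and not merely cosmetic: a generic $2$-disk $D$ in the $5$-manifold bounding $\gamma$ will meet the $3$-manifold $W$ in isolated interior points, and you must first arrange the algebraic count to vanish (tubing $D$ into meridian $2$-spheres of $W$) before pushing them off; the framing condition for the ambient surgery is a $\mathbb{Z}/2$ obstruction that must be addressed; and the final passage from ``all rel-$S$ handles cancelled'' to ``$W\cong S\times I$'' relies on the fact that a $3$-dimensional $h$-cobordism between surfaces of the same genus is a product (Waldhausen), together with a Poincar\'e--Lefschetz computation showing that once $\pi_1(S)\to\pi_1(W)$ is an isomorphism, $W$ is an $h$-cobordism; for positive genus, getting that map to be an isomorphism (not just surjective) requires care.
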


In this paper, we extend the construction of \cite{otherpaper} to to a more general $4$-dimensional manifold, as in the $4$-dimensional light bulb theorem. In particular, we do not assume the ambient manifold is simply connected, so our first main theorem does not follow from Theorem \ref{sunukjianthm} (unless $\pi_1(X^4)=0$).

\begin{theorem}\label{easytheorem}
Let $X^4$ be an orientable $4$-manifold so that $\pi_1(X^4)$ has no $2$-torsion. Let $R$, $R'$, and $G$ be embedded $2$-spheres in $X^4$ so that the following conditions hold:
\begin{itemize}
\item $G$ has trivial normal bundle,
\item $R\cap G=\pt$, where the intersection is transverse.
\item $R'$ is homotopic to $R$.
\end{itemize}

Then $R$ and $R'$ are concordant.
\end{theorem}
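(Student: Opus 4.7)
The plan is to reduce to Gabai's $4$-dimensional light bulb theorem by first producing an embedded concordance from $R'$ to an intermediate sphere $R''$ that shares the transverse sphere $G$ with $R$, and then concatenating with the trace of the ensuing ambient isotopy.

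After perturbing $R'$ to meet $G$ transversely, the algebraic equality $R'\cdot G=R\cdot G=\pm 1$ gives that $|R'\cap G|$ is odd; list its points as $\{q,p_1^+,p_1^-,\dots,p_k^+,p_k^-\}$ where the $(p_i^+,p_i^-)$ are the $k$ oppositely-signed pairs. For each pair choose an embedded arc $\alpha_i\subset G$ from $p_i^+$ to $p_i^-$ missing the other intersection points; such arcs exist and can be chosen pairwise disjoint since $G\cong S^2$ and $R'\cap G$ is a finite set. I then perform $k$ disjoint concordance modifications along the $\alpha_i$, each supported in a thin slab $X^4\times[t_i-\epsilon,t_i+\epsilon]\subset X^4\times I$. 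In such a slab I carry out a Norman-style tubing move along $\alpha_i$: although tubing a sphere into $G$ inside a single copy of $X^4$ would raise the genus (replacing $R'$ by a torus), performing the move inside a slab of $X^4\times I$ allows a canceling $2$-handle in the $I$-direction, so that the resulting cobordism in the slab is $S^2\times I$ with one fewer pair of intersections with $G\times I$. Concatenating the $k$ modifications with trivial product cylinders yields an embedded concordance from $R'$ to a sphere $R''$ with $R''\cap G$ a single transverse point.

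Since $R''$ is concordant to $R'$, it is homotopic to $R'$ and hence to $R$. Both $R$ and $R''$ meet $G$ transversely in exactly one point, and $\pi_1(X^4)$ has no $2$-torsion, so Gabai's $4$-dimensional light bulb theorem gives an ambient isotopy from $R''$ to $R$. Its trace is an embedded concordance, which concatenated with the concordance constructed above produces the desired embedded concordance from $R$ to $R'$.

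The main technical obstacle is implementing the ``Norman trick across a slab'' construction so that the resulting cobordism is genuinely $S^2\times I$ rather than a higher-genus-surface-times-$I$. The picture is a canceling $1$-handle/$2$-handle pair added to the trivial concordance $R'\times[t_i-\epsilon,t_i+\epsilon]$: the $1$-handle runs along $\alpha_i$ and eliminates the $\{p_i^+,p_i^-\}$ intersection pair (at the cost of making an intermediate slice a torus), and the canceling $2$-handle---available because of the extra $I$-direction---restores sphere topology. Making this precise and checking that the $k$ modifications can be performed disjointly with the single essential intersection $q$ left untouched is the main bookkeeping work; the $\pi_1$ hypothesis is not needed for this step and enters only through the invocation of Gabai's theorem in the final reduction.
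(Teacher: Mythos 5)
Your high-level strategy---produce an embedded concordance from $R'$ to an intermediate sphere $R''$ that meets $G$ transversely in one point and is homotopic to $R$, then invoke Gabai's light bulb theorem to isotope $R''$ to $R$, and concatenate---is sound, and in outline it resembles the paper's. The gap is in producing that concordance. To restore sphere topology after the tubing along $\alpha_i$, your $3$-dimensional $2$-handle must geometrically cancel the $1$-handle, so its core is an embedded disk $D$ in a single slice $X^4\times\{t\}$, bounded by a circle on the genus-one level surface that crosses the belt sphere of the tube once, and with interior disjoint from that level surface. ``The extra $I$-direction'' does not produce this disk; it only lets you put the $1$-handle and $2$-handle at different times so the cobordism is embedded. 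You must still exhibit $D$ in a $4$-manifold slice, and you have not said where it comes from. Moreover, for $R''\cap G$ to actually drop, you need $D\cap G=\emptyset$: compressing doubles $D$ into two parallel copies glued into $R''$, so $|R''\cap G| = |R'\cap G| - 2 + 2|D\cap G|$. The ``obvious'' disk (the one filling in the tube back to $R'$, contained in $\nu(\alpha_i)$) meets $G$ along $\alpha_i$; any compressing disk whose attaching circle has nonzero linking with $G$ must meet $G$ algebraically, reintroducing intersections. A disk that is simultaneously disjoint from $G$ and from the evolving surface, and whose attaching circle dualizes the tube, is precisely the missing ingredient, and it is not automatic.

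The paper produces exactly such compressing disks, but only after doing a substantial amount of work that has no analogue in your sketch. It decomposes the regular homotopy $R'\simeq R$ (via Smale) into finger moves and Whitney moves, attaches the concordance's $1$-handles along tubes at the \emph{self}-intersections of the immersed intermediate sphere $S$ (not at intersections of $R'$ with $G$), and then uses the Whitney disks to isotope the resulting genus-$n$ surface $S_+$ into a small neighborhood of $R\cup\eta_1\cup\cdots\cup\eta_n$. Only after this isotopy are the compressing disks built: each one runs a narrow tube from a belt sphere along an arc $\alpha_i\subset R$ to $z=R\cap G$ and is capped off with a pushed-off copy of $G\setminus\nu(z)$. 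It is the isotopy of $S_+$ close to $R$, together with the transverse sphere $G$ and the triviality of its normal bundle, that guarantees these disks can be made embedded and disjoint from the surface. Your proposal discards the homotopy from $R'$ to $R$ except to know $R'\cdot G=1$ and to invoke Gabai at the end, and so has no mechanism for controlling the compressing disks. To salvage it, you would either need a new argument for the existence of those disks near $G$, or you would end up reconstructing the paper's regular-homotopy argument.
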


%
%
%


\begin{definition}
Let $R$ be a surface embedded in a $4$-manifold $X^4$. We say $G$ is a {\emph{transverse sphere to $R$}} if $G$ has trivial normal bundle and intersects $R$ in one point (transversely).
\end{definition}

Schwartz \cite{hannah} has shown that the $2$-torsion assumption of Theorem \ref{easytheorem} is necessary. In \cite{hannah}, Schwartz constructs a pair of homotopic spheres $R,R'$ with a common transverse sphere $G$ in $4$-manifold $X^4$ so that $R$ and $R'$ are not concordant. (The main purpose of this example is to show that a $2$-torsion hypothesis is necessary in the $4$-dimensional light bulb theorem to conclude that $R$ and $R'$ are isotopic, but this simultaneously obstructs the more general relation of concordance.) Schwartz has infinitely many distinct examples of such pairs (specifically, a finite number of examples in each of inifinitely many ambient $4$-manifolds). In these examples, the 4-dimensional light bulb theorem does not apply because $\pi_1(X^4)$ has $2$-torsion.

In fact, the counterexamples of \cite{hannah} also show that Theorem 6.2 of \cite{sunukjian} is not true. This theorem implies concordance of surfaces $S_0$ and $S_1$ in $4$-manifold $X^4$ given three conditions:
\begin{itemize}
\item $\pi_1(S_i)\to \pi_1(X^4)$ is trivial for each $i$ (e.g. $S_i$ is a sphere),
\item $[S_0]=[S_1]$ in $H_2(X^4,\Z[\pi_1])$ (i.e. the lifts of $S_0, S_1$ to the universal cover of $X^4$ are componentwise homologous)
\item There exists a third surface $S$ in $X^4$ so that $\pi_1(S)\to\pi_1(X)$ is trivial, $[S]=[S_0]$ in $H_2(X^4;\Z[\pi_1])$, and the meridian of $S$ is nullhomotopic in $X-S$ (e.g. $S=S_0$ if $S_0$ has a transverse sphere).
\end{itemize}

However, Schwartz's spheres $R$ and $R'$ (taking the place of $S_0$ and $S_1$) satisfy all three of these conditions but are not concordant. (The first condition is obviously satisfied and the third follows from $R$ and $R'$ having transverse spheres. The second condition appears during the construction of \cite{hannah}; the lifts of $R$ and $R'$ to the universal cover of $X^4$ are in fact isotopic.)

Otherwise, Theorem 6.2 of \cite{sunukjian} would imply Theorem \ref{easytheorem}. In the long term, we hope that some modification in the presence of 2-torsion might correct this theorem. Theorem \ref{easytheorem} (and Theorem \ref{maintheorem}, which has yet to be stated) cover the cases in which $S_0$ and $S_1$ are homotopic and $S_0$ has a transverse sphere.

Our second main theorem applies when $\pi_1(X^4)$ has $2$-torsion.

\begin{theorem}\label{maintheorem}
Let $X^4$ be an orientable $4$-manifold. Let $R$ and $R'$ be $2$-spheres embedded in $X^4$ so that $R$ has a transverse sphere $G$ and $R'$ is homotopic to $R$.

Then up to an obstruction related to how a homotopy from $R'$ to $R$ interacts with $2$-torsion elements in $\pi_1(X^4)$, $R'$ is concordant to $R$.
\end{theorem}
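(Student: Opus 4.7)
The plan is to adapt the proof of Theorem \ref{easytheorem} to the setting where $\pi_1(X^4)$ may have $2$-torsion, at the cost of introducing an obstruction which must vanish for the construction to go through. I would begin by taking a generic homotopy from $R'$ to $R$ and passing to its trace, realizing it as an immersed concordance $\hat H \subset X^4 \times I$ whose self-intersection set is a disjoint union of circles $C_1, \ldots, C_n$. The transverse sphere $G$ yields, after a small perturbation, an immersed concordance $G \times I$ from $G$ to itself, and by repeated applications of the finger-move and tubing techniques from the proof of Theorem \ref{easytheorem}, I would arrange that $\hat H \cap (G \times I)$ is a single arc (and in particular that $R'$ meets $G$ in one transverse point).

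To each self-intersection circle $C_i$ I would associate a Whitney loop $\gamma_i \in \pi_1(X^4)$, obtained by choosing a path in the domain $S^2 \times I$ between the two preimages of a point of $C_i$ under the immersion and then pushing down to $X^4 \times I \simeq X^4$. When $\gamma_i$ is not a $2$-torsion element, I would use $G \times I$ to clear intersections of a suitable Whitney-like $2$-disk with $\hat H$ and then perform a Whitney move to eliminate $C_i$, following the same doubling trick that makes Theorem \ref{easytheorem} work in the $2$-torsion-free setting. The circles that remain after this process are precisely those whose Whitney loop lies in a nontrivial conjugacy class of $2$-torsion elements of $\pi_1(X^4)$.

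I would then define the obstruction $\omega(R,R')$ as the formal sum in $\mathbb{F}_2[T]$, where $T$ is the set of nontrivial conjugacy classes of $2$-torsion elements of $\pi_1(X^4)$, whose coefficient at $[t] \in T$ is the mod-$2$ count of remaining circles with Whitney loop in $[t]$. When $\omega(R,R') = 0$, the remaining circles can be grouped into pairs within each class, and a more elaborate Whitney-type maneuver---possible in $X^4 \times I$ because the extra ambient dimension leaves room for a regularly embedded Whitney disk between two same-class circles---cancels each pair, producing the desired embedded concordance.

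The main obstacle will be showing that $\omega(R,R')$ is well-defined, i.e. independent of the choice of initial homotopy, of the tubings through $G\times I$, and of the order in which Whitney-type moves are performed. I would handle this by a $1$-parameter family argument: any two generic homotopies from $R'$ to $R$ are connected by a generic $1$-parameter family of homotopies, along which the self-intersection circles of the traces undergo a finite list of elementary modifications (births and deaths of circles, band-sum-like recombinations, and passes across the $G\times I$ tubes). For each such elementary modification one must verify that the new count of $2$-torsion circles, weighted by conjugacy class, differs from the old by the zero element of $\mathbb{F}_2[T]$; the $2$-torsion-free modifications are precisely those handled by Theorem \ref{easytheorem}, so the bookkeeping concentrates on the cases in which a newly born or recombined circle carries a $2$-torsion Whitney loop, which will come in pairs by the usual involutive nature of the corresponding local moves.
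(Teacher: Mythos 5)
Your approach diverges substantially from the paper's and contains gaps at exactly the critical junctures. The paper does not work with the trace of the homotopy as an immersed concordance in $X^4\times I$. Instead, it stays in $X^4$: it takes a regular homotopy consisting of finger moves and Whitney moves, builds a concordance by attaching explicit $3$-dimensional $1$-handles (one per finger move) and then geometrically cancelling $2$-handles constructed by tubing to parallel copies of $G$, producing a sphere $R''$ with $R''\cap G=\pt$; it then identifies $R''$ as the realization of a \emph{tubed surface} on $R$ in Gabai's sense and invokes Gabai's Proposition \ref{tubeprop} to conclude $R''$ is isotopic to $R$. By contrast, your plan to eliminate circles of double points in an immersed concordance $\hat H\subset X^4\times I$ by $5$-dimensional Whitney maneuvers is in spirit the strategy of Sunukjian's Theorem 6.2, which this paper explicitly observes (via Schwartz's examples) is \emph{not} correct as stated -- the failure being precisely the $2$-torsion phenomenon you are trying to encode. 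So the burden is on you to show that the cancellation of paired ``$2$-torsion circles'' actually succeeds, and you do not give a mechanism; the sentence asserting that ``the extra ambient dimension leaves room for a regularly embedded Whitney disk'' is where the real content lives, and it is the hard part. The paper sidesteps this by funneling everything through Gabai's already-proved tubed-surface machinery.

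There is a second, more structural gap: your obstruction $\omega(R,R')\in\mathbb{F}_2[T]$ counts every remaining circle whose Whitney loop is a nontrivial $2$-torsion conjugacy class, but the obstruction that actually matters in Gabai's theorem (and in the paper's Theorem \ref{maintheorem}) is the multiset $\{[\eta_i]\mid w_i\text{ crossed with respect to }L\}$. The crossed/uncrossed distinction -- whether the two arcs of $\pi_{S_i}^{-1}(\partial W_i)$ both connect $L_x$ to $L_y$, or whether one stays in $L_x$ and one in $L_y$ -- is essential: a Whitney move with $2$-torsion $[\eta_i]$ that is \emph{uncrossed} does not obstruct anything, and this depends on a choice of labeling. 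Your proposal has nothing playing the role of the labeling $L$, so your obstruction is strictly coarser and would wrongly report an obstruction in cases the theorem covers. Relatedly, the paper's theorem is existential -- if \emph{some} regular homotopy and labeling have vanishing obstruction, the spheres are concordant -- so the well-definedness of $\omega$ under $1$-parameter families, which you flag as your main obstacle, is not needed for the statement and would be proving something different (and stronger than what is currently known).
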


Theorem \ref{maintheorem} generalizes Theorem \ref{easytheorem}. We state Theorem \ref{maintheorem} precisely in Section \ref{thmstatement}, after giving several necessary definitions.

Finally, applying an argument of Sunukjian \cite[Thm. 7.5]{sunukjian}, we obtain the following corollary.

\begin{corollary}\label{scobordism}
Let $X^4$ be a $4$-manifold. Let $R$ and $R'$ be $2$-spheres smoothly embedded in $X^4$ satisfying the hypotheses of Theorem \ref{maintheorem}.

Assume that $\pi_1(X^4)$ is a good group (as in \cite{quinn}) and that a meridian of $R'$ is nullhomotopic in $X^4\setminus R'$.

There there is a homeomorphism from the pair $(X^4,R')$ to $(X^4,R)$.
\end{corollary}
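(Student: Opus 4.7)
The plan is to apply Theorem \ref{maintheorem} to produce a smooth concordance $H\cong S^2\times I$ from $R'$ to $R$ in $X^4\times I$, and then to follow Sunukjian's argument from \cite[Thm.~7.5]{sunukjian} to promote this concordance to a homeomorphism of pairs via Freedman's topological $s$-cobordism theorem. The hypotheses are tailored to this: Theorem \ref{maintheorem} supplies the concordance, goodness of $\pi_1(X^4)$ is precisely what Freedman's theorem demands, and the nullhomotopic-meridian conditions ensure that the complements and the cobordism between them all have the same fundamental group as $X^4$.

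Once $H$ is obtained, I would form the exterior $W=(X^4\times I)\setminus\mathrm{int}(\nu(H))$. Since $R$ and $R'$ are homotopic they share the same Euler number, so the rank-$2$ normal disk bundle $\nu(H)$ restricts on the two ends to $\nu(R')$ and $\nu(R)$; thus $W$ is naturally a cobordism (with corners) between $E(R')=X^4\setminus\mathrm{int}(\nu(R'))$ and $E(R)=X^4\setminus\mathrm{int}(\nu(R))$, glued along $\partial\nu(H)\cong\partial\nu(R')\times I$.

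The central step is verifying that $W$ is a topological $s$-cobordism. The transverse sphere $G$, punctured at its intersection with $R$, nullhomotopes the meridian of $R$ in $X^4\setminus R$, and the meridian of $R'$ is nullhomotopic in $X^4\setminus R'$ by hypothesis; together with a van Kampen computation these facts yield $\pi_1(E(R'))\cong\pi_1(W)\cong\pi_1(E(R))\cong\pi_1(X^4)$. Because $H\cong S^2\times I$ is a product and the meridians die in $W$, a $\Z[\pi_1(X^4)]$-coefficient computation (as in Sunukjian's proof) forces $H_\ast(W,E(R'))=0=H_\ast(W,E(R))$, and the product structure on $H$ forces the Whitehead torsion to vanish.

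Finally, since $\pi_1(X^4)$ is good, Freedman's topological $s$-cobordism theorem provides a homeomorphism $W\cong E(R')\times I$ rel $E(R')$; restricting to the opposite end yields a homeomorphism $E(R)\to E(R')$, which extends over the identification $\nu(R)\cong\nu(R')$ inherited from the product structure on $\nu(H)$ to give the desired homeomorphism of pairs $(X^4,R)\to(X^4,R')$. The main obstacle is the verification of the $s$-cobordism condition --- in particular the Whitehead torsion calculation --- but because $H$ is a smooth product concordance between spheres with nullhomotopic meridians and trivial fundamental groups, this reduces to the standard computation already carried out in Sunukjian's work.
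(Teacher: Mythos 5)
Your proposal follows the same route as the paper, which is an almost verbatim transplantation of Sunukjian's argument from \cite[Thm.~7.5]{sunukjian}: build the concordance via Theorem \ref{maintheorem}, pass to the exterior, verify the $s$-cobordism condition, and invoke Freedman. The paper phrases the homology verification by lifting $H'=(X^4\times I)\setminus H$ to the universal cover and running Mayer--Vietoris there (using that meridians of $\newtilde{H}$ bound in the complement, via the lifted transverse sphere and the hypothesis on $R'$, so that $\newtilde{H'}$ really is the universal cover of $H'$ and is simply connected); this is the universal-cover form of the $\Z[\pi_1]$-coefficient computation you sketch, and the two are interchangeable.

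The one step worth tightening in your write-up is the Whitehead torsion. You assert that ``the product structure on $H$ forces the Whitehead torsion to vanish,'' but $\tau(H)=0$ alone does not give $\tau(W)=0$. The correct input is additivity: $X^4\times I = H\cup H'$, and \emph{both} $H\cong S^2\times I\times I$ and $X^4\times I$ are products, so $\tau(H)=0$ and $\tau(H\cup H')=0$, whence $\tau(H')=0$ by additivity of Whitehead torsion over the common boundary. Your remaining details (the Euler number matching, the extension of the homeomorphism of exteriors across $\nu(H)$ to a homeomorphism of pairs) are fine and match the structure of Sunukjian's proof, which the paper simply cites rather than reproduces.
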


This corollary is our only foray outside of the smooth category.

\begin{proof}

We repeat the argument almost verbatim. In Theorem \ref{maintheorem}, we construct a concordance $H$ from $R'$ to $R$. Lift $H\subset X\times I$ to the universal cover $\newtilde{X}\times I$ of $X\times I$ to find a cobordism $\newtilde{H}$ from $\newtilde{R'}$ to $\newtilde{R}$ (which are the lifts of $R'$ and $R$, respectively.) Each component of $\newtilde{R}$ has a transverse sphere in the lift $\newtilde{G}$ of $G$, so every meridian of a component of $\newtilde{H}$ bounds a disk in $\newtilde{H'}:=(\newtilde{X}^4\times I)\setminus\newtilde{H}$. Therefore, $\newtilde{H'}$ is the universal cover of $H':=(X^4\times I)\setminus H$.
The Mayer-Vietoris sequence says that $\newtilde{H'}$ is an $h$-cobordism (here using the fact that $\newtilde{X^4}\setminus\newtilde{R'}$, $\newtilde{X^4}\setminus\newtilde{R}$, and $\newtilde{H'}$ are simply connected), so $H'$ is also an $h$-cobordism. By additivity of Whitehead torsion (note $H$ and $H\cup H'=X^4\times I$ are products), $H'$ is actually an $s$-cobordism. Since $\pi_1(X^4\setminus R)\cong\pi_1(X^4)$ is good, $H'$ is topologically a product. This product structure yields the desired homeomorphism.
\end{proof}

\subsection*{Organization}
We organize the paper as follows.

{\bf{Section \ref{3dsection}:}} We discuss the $3$-dimensional light bulb theorem as lower-dimensional motivation. We give the proof of \cite{yildiz} of a concordance analogue of the $3$-dimensional light bulb theorem (also proved by \cite{davis}).

{\bf{Section \ref{4dsection}:}} We discuss the $4$-dimensional light bulb theorem and the statement of the main theorems.
\begin{itemize}
\item[]{\bf{Subsection \ref{simplesubsec}:}} We state the $4$-dimensional light bulb theorem and Theorem \ref{easytheorem} for $4$-manifolds with no $2$-torsion in $\pi_1$.
\item[]{\bf{Subsection \ref{htpysec}:}} We remind the reader of important facts about homotopy and regular homotopy of surfaces in $4$-manifolds.
\item[]{\bf{Subsection \ref{thmstatement}:}} We state the general $4$-dimensional light bulb theorem and Theorem \ref{maintheorem}. To do this, we give many definitions from \cite{gabai}.
\item[]{\bf{Subsection \ref{tubedsec}:}} We recall the definition of a tubed surface from  \cite{gabai}. 
\end{itemize}

{\bf{Section \ref{proofsec}:}} We give the proofs of Theorems \ref{easytheorem} and \ref{maintheorem}.

\subsection*{Acknowledgements}
The author thanks her graduate advisor, David Gabai, for helpful conversations. Thanks to Danny Ruberman for useful discussion on regular homotopy and understanding Theorem \ref{hirsch} (by Hirsch \cite{hirsch}) and Nathan Sunukjian for interesting comments, including suggesting Corollary \ref{scobordism}.
 
The author is a fellow in the National Science Foundation Graduate Research Fellowship program, under Grant No. DGE-1656466.

\section{A dimension down: the light bulb theorem in dimension three}\label{3dsection}

\begin{theorem}[3-dimensional light bulb theorem (folklore)]\label{3dlight bulb}
Let $K$ be a circle embedded in $S^1\times S^2$ so that $K$ intersects $\pt\times S^2$ geometrically once (and that intersection is transverse). Then $K$ is isotopic to $S^1\times\pt$.
\end{theorem}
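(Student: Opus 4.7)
The plan is to cut $S^1 \times S^2$ open along the transverse sphere $S = \pt \times S^2$ and reduce the theorem to an arc isotopy problem. Specifically, I would view $S^1 \times S^2$ as $(I \times S^2) / ((0, x) \sim (1, x))$, so that $S$ corresponds to the identified slice $\{0\} \times S^2$. By hypothesis, $K$ meets $S$ transversely in a single point $(*, p)$; cutting $K$ along $S$ therefore yields a properly embedded arc $\alpha \subset I \times S^2$ with endpoints $(0, p)$ and $(1, p)$. An isotopy rel endpoints from $\alpha$ to $\alpha_0 := I \times \{p\}$ in $I \times S^2$ descends to an isotopy from $K$ to $S^1 \times \{p\}$ in $S^1 \times S^2$, so it suffices to construct the former.

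To do this, I would do Morse theory on the height function $t: I \times S^2 \to I$ restricted to $\alpha$. After a small generic perturbation rel endpoints, $t|_\alpha$ is Morse, and its interior critical points alternate between local maxima and minima along $\alpha$. If there are no interior critical points, then $t|_\alpha$ is monotonic, and $\alpha = \{(s, \phi(s)) : s \in I\}$ for some smooth $\phi: I \to S^2$ with $\phi(0) = \phi(1) = p$; since $\pi_1(S^2) = 0$, $\phi$ is null-homotopic rel endpoints, and any such null-homotopy $\phi_r$ gives a family of graphs $\{(s, \phi_r(s)) : s \in I\}$ that are automatically embedded (the $t$-coordinate separates points), yielding an isotopy of $\alpha$ to $\alpha_0$ that extends to an ambient isotopy by the isotopy extension theorem. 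The bulk of the work is to reduce to this monotonic case by canceling pairs of interior critical points.

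I would cancel a consecutive-along-$\alpha$ max-min pair $(c_i, c_{i+1})$ by a ``light bulb move.'' Writing $\beta \subset \alpha$ for the subarc joining them and $N = [h_m - \epsilon, h_M + \epsilon] \times S^2$ for a thickened slab containing $\beta$, the idea is to find an embedded disk $D \subset N$ bounded by the union of (a slight extension of) $\beta$ and a monotonic arc $\beta'$ with the same endpoints, and disjoint from $\alpha \setminus \beta$. An isotopy of $\alpha$ across $D$ replaces $\beta$ by $\beta'$ and removes the critical pair. The main obstacle is producing this embedded disk $D$: a monotonic candidate $\beta'$ can be constructed using the simple connectivity and $2$-dimensionality of $S^2$, connecting the endpoints of $\beta$ by a path that generically avoids the finitely many transverse intersections of $\alpha \setminus \beta$ with each relevant level sphere; then the loop $\beta \cup \beta'$ in the simply connected slab $N$ bounds an immersed disk, which can be promoted to an embedded disk disjoint from $\alpha \setminus \beta$ by a Dehn's-lemma-style general position argument in dimension $3$. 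Iterating this move drives the interior critical count to zero, reducing to the monotonic case and completing the proof.
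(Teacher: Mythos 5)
Your approach is genuinely different from the paper's. The paper stays in $S^1\times S^2$: it observes that $K$ is regularly homotopic to $S^1\times\pt$, that such a regular homotopy factors through isotopies and crossing changes, and that each crossing change can be realized by an isotopy sweeping a strand of $K$ around $\pt\times S^2$ (the ``light bulb trick''). You instead cut along $\pt\times S^2$, reduce to straightening a properly embedded arc $\alpha\subset I\times S^2$ rel its endpoints, and attack this by Morse theory on the projection to $I$. Your monotonic endgame (graphs over $I$, $\pi_1(S^2)=0$, isotopy extension) is fine, as is the reduction of the closed problem to the arc problem.

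The gap is in the cancellation step. You claim that, after choosing $\beta'$ to avoid $\alpha$ at each level, ``the loop $\beta\cup\beta'$ in the simply connected slab $N$ bounds an immersed disk, which can be promoted to an embedded disk disjoint from $\alpha\setminus\beta$ by a Dehn's-lemma-style general position argument.'' Simple connectivity of $N$ only gives a singular disk in $N$, not in $N\setminus(\alpha\setminus\beta)$; the loop $\beta\cup\beta'$ can perfectly well be essential in the complement (for instance, if $\beta'$ winds once around a vertical strand of $\alpha\setminus\beta$ it links that strand and then bounds no disk of any kind disjoint from it). Null-homotopy in $N$ simply does not control intersections with $\alpha\setminus\beta$, and Dehn's lemma does not apply as invoked: the loop is not in the boundary of the relevant $3$-manifold, and even after pushing it there one needs it to already bound a singular disk \emph{in the complement of} $\alpha\setminus\beta$, which is exactly what has to be proved. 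Relatedly, the bookkeeping of the ``slight extension'' of $\beta$ is left vague; unless the extended endpoints are placed at suitable heights (so that $\beta$ together with the incoming arcs of $\alpha$ and the monotone $\beta'$ glue without introducing new critical points), the move does not actually decrease the critical count.

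To repair this you should construct the disk $D$ directly, level by level: for each regular $h'$ in the slab, $(\alpha\setminus\beta)\cap(\{h'\}\times S^2)$ is a finite set, and $S^2$ minus a finite set is connected, so one can sweep the points of $\beta\cap\{h'\}$ to the point of $\beta'\cap\{h'\}$ through disjoint embedded arcs avoiding the obstruction points, continuously in $h'$, with the arcs degenerating correctly at the critical levels. That connectedness of the punctured level $2$-spheres is precisely where the ``light bulb'' enters after cutting, and it is the actual content of the move; invoking simple connectivity of the slab plus Dehn's lemma conceals rather than proves it. Once this is fixed, your argument is a valid alternative to the paper's, trading the very short crossing-change proof for a more explicit arc-straightening in $S^2\times I$.
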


\begin{proof}
Note that $K$ is regularly homotopic to $S^1\times \pt$ via a finite sequence of isotopies and crossing changes. The effect of each crossing change can be achieved via isotopy, by sweeping a strand of $K$ parallel to $\pt\times S^2$ (see Fig. \ref{fig:light bulbtrick}; this is called the ``light bulb trick'').  Thus, $K$ is in fact isotopic to $S^1\times\pt$.
\end{proof}

\begin{figure}
\includegraphics[width=.6\textwidth]{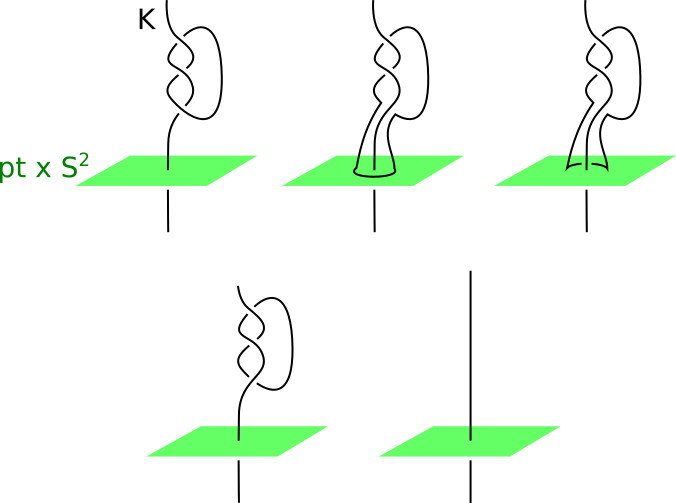}
\caption{Left to right, top to bottom: the $3$-dimensional light bulb trick. If $K$ is a knot in $S^1\times S^2$ intersecting $\pt\times S^2$ in a single point, then we can effect crossing changes in $K$ by sweeping a strand of $K$ over the $2$-sphere $\pt\times S^2$.}\label{fig:light bulbtrick}
\end{figure}

The statement of the 3-dimensional light bulb theorem requires that $K$ have a transverse sphere. In this dimension, we mean that there must be a $2$-sphere $G$ (in this setting, $\pt\times S^2$) so that $K\cap G=\pt$. When we only know the algebraic intersection of $K$ and $G$, then the theorem does not necessarily hold (for example, the knot $K$ in the leftmost frame of Fig. \ref{fig:3dconcordance} is not isotopic to $S^1\times\pt$, since $\pi_1((S^1\times~S^2)\setminus K)\not\cong \mathbb{Z}$). However, one can still construct a concordance from $K$ to $S^1\times\pt$.

\begin{theorem}[Concordance analogue of 3-dimensional light bulb theorem \cite{yildiz} \cite{davis}]
Let $K$ be a circle embedded in $S^1\times S^2$ so that $[K]=[S^1\times \pt]$ in $H_1(S^1\times S^2)$. Then $K$ is concordant to $S^1\times\pt$.
\end{theorem}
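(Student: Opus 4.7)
Plan. Since $\pi_1(S^1\times S^2)\cong H_1(S^1\times S^2)\cong\mathbb{Z}$, the hypothesis $[K]=[S^1\times\pt]$ implies that $K$ is homotopic to $S^1\times\pt$, so after an ambient isotopy we may assume that $K$ meets $\pt\times S^2$ transversely in $2k+1$ points: one unpaired positive intersection $p_0$ and $k$ canceling pairs $(p_i^+,p_i^-)$. The strategy is to cancel these pairs by Whitney moves performed in the $4$-manifold $(S^1\times S^2)\times I$ (rather than in $S^1\times S^2$, where they are generally obstructed), and then invoke the $3$-dimensional light bulb theorem (Theorem~\ref{3dlight bulb}) to finish.

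For each pair, I would pick disjoint embedded arcs $\alpha_i\subset K$ and $\beta_i\subset\pt\times S^2$ joining $p_i^+$ to $p_i^-$. The loop $\gamma_i:=\alpha_i\cup\beta_i$ represents a class in $\pi_1(S^1\times S^2)\cong\mathbb{Z}$, and swapping $\alpha_i$ with the complementary arc of $K$ changes this class by $\pm[K]=\pm 1$; hence I can arrange every $\gamma_i$ to be nullhomotopic and therefore to bound an immersed disk $D_i\subset S^1\times S^2$. I would then lift each $D_i$ to an embedded Whitney disk $\widetilde D_i\subset(S^1\times S^2)\times I$ via a generic height function that vanishes on $\partial D_i=\gamma_i$: the two preimages of each transverse self-intersection of $D_i$ are then separated into distinct time slices, so $\widetilde D_i$ is embedded, and choosing disjoint height ranges for different $i$ makes the collection mutually disjoint. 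Performing a Whitney move along each $\widetilde D_i$ produces an embedded annulus in $(S^1\times S^2)\times[0,1/2]$ from $K\times\{0\}$ to some $K''\times\{1/2\}$ with $K''\cap(\pt\times S^2)$ a single point. Theorem~\ref{3dlight bulb} then gives an ambient isotopy from $K''$ to $S^1\times\pt$, and stacking the product annulus of this isotopy yields the desired concordance.

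The main obstacle will be the second step: ensuring the $\widetilde D_i$ are embedded and disjoint not only from each other but from $K\times I$ and from $(\pt\times S^2)\times I$ in their interiors. Generically a $2$-disk meets $K\times I$ (a $2$-submanifold) in isolated points, and such intersections in a $4$-manifold are stable under small perturbation, so height adjustment alone does not remove them. Handling this step uses the product structure of $(S^1\times S^2)\times I$ together with the nullhomotopy of the $D_i$ and local modifications of the $D_i$ inside $S^1\times S^2$, chosen so that the Whitney moves can actually be carried out inside the $4$-manifold.
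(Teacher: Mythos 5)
Your route is genuinely different from the paper's. The paper starts from a \emph{regular} homotopy from $K$ to $S^1\times\pt$ (a sequence of crossing changes), traces it out in $(S^1\times S^2)\times I$, and resolves each crossing change by attaching a $1$-handle (band) that opens the crossing and creates a small meridian component $U_i$, followed by a $2$-handle (disk) capping $U_i$; the cap disks exist and can be made disjoint precisely because $U_i$ is a meridian of a curve meeting $\pt\times S^2$ once, so one can slide each meridian disk around the sphere (the light bulb trick). The band and cap cancel geometrically, so the result is an annulus. Your route instead tries to cancel excess intersections of $K$ with $\pt\times S^2$ using Whitney circles and then invokes Theorem~\ref{3dlight bulb}.

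There is a genuine gap in the cancellation step, and your own closing paragraph points at it without closing it. First, as stated, ``Whitney move along $\widetilde D_i$'' is not a defined operation: the intersections you are trying to cancel are between $K\times\{0\}$ (dimension $1$) and $(\pt\times S^2)\times\{0\}$ (dimension $2$) inside the $4$-manifold, which are not of complementary dimension, so the standard Whitney framework does not apply directly. What your construction actually produces is a homotopy of $K$ inside $S^1\times S^2$ (pushing $\alpha_i$ across $D_i$), whose trace in $(S^1\times S^2)\times I$ is an annulus. That trace is embedded exactly when the moving curve stays embedded at every time; whenever $\mathring D_i\cap K\neq\emptyset$ --- which, as you observe, is stable and cannot be removed by a height perturbation --- the moving curve acquires a self-crossing, and the traced annulus acquires an isolated double point. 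These double points are precisely the crossing-change singularities that the paper's band-and-cap construction is designed to remove, and finding the cap disks is where the transverse sphere $\pt\times S^2$ is used in an essential way. Saying ``local modifications of the $D_i$'' will handle it is not an argument; the needed modification \emph{is} the band-and-cap step. A secondary issue: swapping $\alpha_i$ with its complementary arc to make $\gamma_i$ nullhomotopic forces $\alpha_i$ to run through other intersection points of $K$ with $\pt\times S^2$, which conflicts with keeping the $\alpha_i$ pairwise disjoint with interiors off $\pt\times S^2$; one would instead need an innermost-pair argument (for instance in the infinite cyclic cover $\mathbb{R}\times S^2$) to choose the pairing and arcs consistently.
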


\begin{proof}
We illustrate this proof in Figure \ref{fig:3dconcordance}. This argument is due to Yildiz \cite{yildiz}.

\begin{figure}
\includegraphics[width=\textwidth]{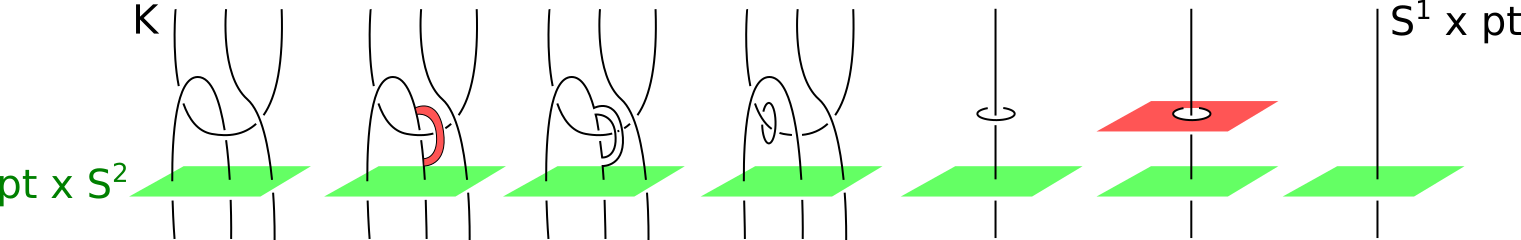}
\caption{Left to right: a movie of a concordance from $K$ to $S^1\times\pt$, where $K$ is a knot in $S^1\times S^2$ with $[K]=[S^1\times\pt]\in H_1(S^1\times D^2)$. The index-$1$ critical points of the concordance annulus appear as bands which effect crossing changes (as in the second frame). The index-$2$ critical points of the concordance annulus appear as disks capping off unlinked, unknotted components of one cross-section (as in the sixth frame). }\label{fig:3dconcordance}
\end{figure}

As in the 3-dimensional light bulb theorem, $K$ is regularly homotopic to $S^1\times\pt$. This regular homotopy is a finite sequence of isotopies and crossing changes. We build a concordance from $K$ (in this dimension, this means we build an annulus) in $(S^1\times S^2)\times I$. Because visualizing submanifolds via movies is an important concept in paper, we attempt to give this easy example in detail.

Say $K$ is regularly homotopic to $S^1\times\pt$ via homotopy $f:S^1\times I\to S^1\times S^2\times I$. Perturb $f$ so that all crossing changes happen at the same time, far from each other. Say there are $n$ crossing changes. Then take $f(S^1\times t)$ to be smoothly embedded for $t\neq 1/2+\epsilon$, and $f(S^1\times (1/2+\epsilon))$ to have $n$ self-intersections consisting of double-points (for some small $\epsilon>0$).

Now we build an annulus $A$ in $(S^1\times S^2)\times I$. Obtain $A'$ from $f(S^1\times [0,1/2])$ by attaching $2$-dimensional $1$-handles (bands) $b_1,\ldots, b_n$ to $f(S^1\times 1/2)$. Specifically, attach one band for each crossing change of $f$. Each band lives in a neighborhood of the corresponding crossing change in $f(S^1\times 1/2)$ and is embedded as in Figure \ref{fig:3dconcordance} (second image).

Let $K'=(f(S^1\times1/2)\setminus (\cup_i b_i))\cup\overline{((\cup_i \boundary b_i)\setminus f(S^1\times1/2))}$. (In words, $K'$ is obtained from $f(S^1\times1/2)$ by deleting the intersection with $\boundary b_i$ and adding in the rest of the boundary of $b_i$; this is normal band surgery.) View $K'$ as a subset of $S^1\times S^2$ by identifying $S^1\times S^2\times 1/2$ with $S^1\times S^2$. Note that $K'$ is a disjoint union of $n+1$ circles. One of these components $C$ is isotopic to $S^1\times\pt$, while other $n$ components $U_1,\ldots, U_n$ are meridians of $C$. Each $U_i$ bounds a disk $D_i$ which does not intersect $K'$ in its interior (with $D_i\cap D_j=\emptyset$ for $i\neq j$).

Now let $A'':=A'\cup (C'\times[1/2,3/4])$ in $S^1\times S^2$. Attach the $2$-dimensional $2$-handle (disk) $D_i$ to $(U_i\times 3/4)\subset\boundary A''$ for $i=1,\ldots, n$; call the result $A'''$. (See Fig. \ref{fig:3dconcordance}, sixth image.) Finally, let $A=A'''\cup_{t\in[0,1]}(g_t(S^1)\times(3+t)/4)$, where $g:S^1\times I\to S^1\times S^2\times I$ is an isotopy from $C$ to $S^1\times\pt$.

Thus, we have constructed a surface $A$ in $S^1\times S^2\times I$. See Figure \ref{fig:3dconcordance} for a clear schematic of the above construction. We remark that as described, $A$ is not smoothly embedded, but rather has corners ($A$ is in {\emph{horizontal-vertical position}}). We can standardly smooth these corners and take $A$ to be smoothly embedded. We won't remark on this distinction later in the paper.

By construction, $A\cap(S^1\times S^2\times 0)=K$ and $A\cap(S^1\times S^2\times 1)=S^1\times\pt$. Moreover, $A$ is obtained from $K\times[0,1/2]$ by attaching $n$ geometrically cancelling pairs of $1$- and $2$-handles, so $A$ is an annulus. Therefore, $A$ is a concordance from $K$ to $S^1\times\pt$.
\end{proof}

\section{The light bulb theorem in dimension four}\label{4dsection}
Now we move up a dimension, to consider the $4$-dimensional light bulb theorem.

\subsection{When the ambient $4$-manifold has no $2$-torsion in its fundamental group}\label{simplesubsec}

\begin{theorem}[4-dimensional light bulb theorem, {\cite[Thm. 1.2]{gabai}}]\label{easy4dlight}
Let $X^4$ be an orientable $4$-manifold so that $\pi_1(X^4)$ has no $2$-torsion. Let $R$ and $R'$ be $2$-spheres embedded in $X^4$ so that $R$ and $R'$ have a mutual transverse sphere $G$ and $R$ is homotopic to $R'$. Then $R$ and $R'$ are isotopic.
\end{theorem}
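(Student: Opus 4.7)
The overall plan mimics the three-dimensional argument in Section~\ref{3dsection} at the level of regular homotopy: I first upgrade the given homotopy from $R$ to $R'$ to a \emph{regular} homotopy, and then convert each elementary move of that regular homotopy into an ambient isotopy by using the transverse sphere $G$ as a ``light bulb'' in codimension two. In codimension two the elementary moves to be trivialized are not crossing changes of arcs but cancelling pairs of transverse double points via Whitney moves, so the analogue of the 3-dimensional picture is more elaborate.

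First I would arrange that $R$ and $R'$ are regularly homotopic, not merely homotopic. By Hirsch's theorem (Theorem~\ref{hirsch}), the obstruction is the difference of the normal Euler numbers of $R$ and $R'$. Since $G$ has trivial normal bundle and meets $R$ (resp.\ $R'$) in a single transverse point, tubing $R$ or $R'$ into parallel copies of $G$ alters its Euler number by controllable even integers, and I can match the two Euler numbers. Once $R$ and $R'$ are regularly homotopic, I put the regular homotopy $\{R_t\}$ in general position, so that it factors as a sequence of \emph{finger moves} (each creating a pair of oppositely-signed self-intersections joined by a Whitney arc) followed by \emph{Whitney moves} (each cancelling such a pair via an immersed Whitney disk). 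Equivalently, there is a single immersed sphere $R''$ with finitely many double points such that both $R$ and $R'$ are obtained from $R''$ by a sequence of Whitney moves.

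The heart of the proof is to realize each of these Whitney moves by an ambient isotopy. For this I need to promote each immersed Whitney disk $W$ to an \emph{embedded}, properly framed Whitney disk whose interior is disjoint from $R$ and from the other Whitney disks in play. Intersections of $W$ with $R$ can be ``shadowed away'' by tubing $W$ along parallel copies of $G$; each such tubing trades an $R$-intersection of $W$ for either a self-intersection of $W$ or an intersection of $W$ with another Whitney disk, which is then eliminated in turn. The bookkeeping for this cascade is naturally organized by Gabai's formalism of \emph{tubed surfaces} (Subsection~\ref{tubedsec}), which packages the immersed surface together with all of the proposed tubes and records the algebraic obstruction to realizing the result as an embedded surface.

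The step I expect to be the main obstacle is precisely the clean-up of intersections among Whitney disks and within a single Whitney disk: each tubing operation can introduce new intersections, and one must show that the process can be arranged to terminate. The residual algebraic obstruction to producing a ``good'' Whitney disk lands in a quotient of $\Z[\pi_1(X^4)]$ by relations that equate each group element with its inverse, so when $\pi_1(X^4)$ contains a $2$-torsion element $g$ one gets $g=g^{-1}$ and obstruction terms that generally do not cancel. The hypothesis that $\pi_1(X^4)$ has no $2$-torsion is exactly what rules this out, letting one kill all such obstructions, upgrade every Whitney move to an ambient isotopy, and so produce the required isotopy from $R$ to $R'$.
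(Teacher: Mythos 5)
This theorem is not proved in the paper at all — it is quoted verbatim as \cite[Thm.~1.2]{gabai} and used as a black box. So there is no ``paper's own proof'' to compare against; what you have written is a sketch of Gabai's argument, and I will assess it on those terms.

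At the level of outline your sketch is in the right ballpark, but the first step contains a genuine error. You invoke Hirsch's theorem and claim that the obstruction to upgrading the homotopy to a regular homotopy is the difference of normal Euler numbers, which you then propose to repair by tubing into copies of $G$. This is wrong on several counts. The relevant result for $2$-spheres is Smale's theorem (Theorem~\ref{smale} in the paper), not Hirsch's, and it says flatly that homotopic embedded $2$-spheres in an orientable $4$-manifold are regularly homotopic — there is no residual Euler-number obstruction to kill. Indeed the normal Euler number of an embedded surface equals its homological self-intersection $[R]\cdot[R]$, which is already a homotopy invariant, so homotopic embedded spheres automatically have equal Euler numbers. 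Worse, the proposed fix — tubing $R$ or $R'$ into parallel copies of $G$ — would change the homology class and is not allowed. The correct move is simply to cite Smale and move on.

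The remainder of your sketch (factor the regular homotopy into finger moves followed by Whitney moves, then trade Whitney moves for ambient isotopies by using $G$ to remove intersections of Whitney disks with $R$, organize the bookkeeping via Gabai's tubed surfaces, and identify the $2$-torsion elements of $\pi_1(X^4)$ as the source of the residual obstruction) is a fair high-level description of Gabai's strategy, and you correctly flag that making the tubing cascade terminate and extracting the $\Z/2$-valued obstruction is the hard part. But this is exactly the part you have not done, and it is the entire content of \cite{gabai}: the normal-form argument, the shadow/crossing-change analysis, and Proposition~\ref{tubeprop} together take up most of that paper. As written, your proposal is an accurate summary of what a proof would have to do, not a proof. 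For the purposes of the present paper that is fine — the theorem is meant to be quoted, not re-proved — but the Euler-number misstep in step one should be corrected regardless.
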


In fact, the above theorem also states that the isotopy from $R$ to $R'$ can be taken to fix a neighborhood of $G$ if $R$ and $R'$ coincide near $G$.

To compare the $4$-dimensional light bulb theorem with the 3-dimensional light bulb theorem, one should notice that $G$ takes the role of $\pt\times S^2$, $R$ takes the role of $S^1\times \pt$ and $R'$ take the role of $K$. The proof of the $4$-dimensional light bulb theorem is considerably more involved than the proof of the $3$-dimensional light bulb theorem.

In Section \ref{3dsection}, we discussed a concordance analogue of the 3-dimensional light bulb theorem. In this paper, we give a concordance analogue of the 4-dimensional light bulb theorem.

\begin{easytheorem}
Let $X^4$ be an orientable $4$-manifold so that $\pi_1(X^4)$ has no $2$-torsion. Let $R$ and $R'$ be $2$-spheres embedded in $X^4$ so that $R$ has a transverse sphere $G$ and $R$ is homotopic to $R'$. Then $R$ and $R'$ are concordant.
\end{easytheorem}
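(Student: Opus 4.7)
The plan is to adapt the 3-dimensional argument from Section \ref{3dsection}, with $G$ playing the role that $\pt\times S^2$ plays there.

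First, I would promote the given homotopy $R \simeq R'$ to a generic smooth map $H\colon S^2 \times I \to X^4$ so that $H(\cdot, t)$ is an immersion with only transverse double points at every time, and the set of double points changes only at isolated finger and Whitney moves. The trace
\[
\Sigma = \{(H(x,t), t) : (x,t) \in S^2 \times I\} \subset X^4 \times I
\]
is then an immersed $S^2 \times I$ whose singular set is a disjoint union of arcs, each running from a finger-move birth point to a Whitney-move death point (no singular arc can meet $R$ or $R'$ since both are embedded).

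Second, I would convert $\Sigma$ into an embedded concordance by resolving each double arc via handle-theoretic modifications, in direct analogy with the bands and disk caps of Section \ref{3dsection}. At each finger move, rather than allowing $H$ to create the pair of double points, attach a band to the level surface that resolves the prospective crossing; this increases the genus (or changes the component structure) of the level surface just past the finger-move time. To undo that topological change, push a parallel copy of $G$ into an appropriate later time slice $X^4\times\{t\}$ (using its trivial normal bundle), tube that copy along an embedded arc to a meridional circle on the affected level surface, and use the resulting embedded disk as a $2$-handle cap. After handling every finger/Whitney pair this way and making the tubes and caps mutually disjoint, one obtains an embedded $3$-submanifold $\widetilde{\Sigma} \subset X^4 \times I$ with $\partial \widetilde{\Sigma} = R \sqcup R'$. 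Because every band is cancelled by a corresponding capping disk, $\widetilde{\Sigma}$ is topologically $S^2 \times I$, i.e., a concordance from $R$ to $R'$.

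The main obstacle is choosing the tubing paths from parallel copies of $G$ to the relevant meridional circles so that all caps end up mutually disjoint and disjoint from the rest of $\widetilde{\Sigma}$. Each such tube carries a homotopy class of path in $X^4$, and when two such classes differ by a $2$-torsion element of $\pi_1(X^4)$, the associated caps can be forced to intersect; Schwartz's examples \cite{hannah} confirm that the theorem genuinely fails in the presence of $2$-torsion. Under the no-$2$-torsion hypothesis, however, such obstructions can be cleared by an ambient isotopy, essentially via the same machinery Gabai develops to prove Theorem \ref{easy4dlight}. I expect this coherent disjointness argument to be the delicate step, and I would carry it out using the tubed surface formalism of \cite{gabai} to be recalled in Subsection \ref{tubedsec}.
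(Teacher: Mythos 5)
Your high-level plan matches the paper's: attach a band to the level surface at each finger move (the paper's $1$-handles $H_i$, forming a cobordism from $R'$ to a genus-$n$ surface $S_+$) and then cap with disks built from parallel copies of $G$ tubed along arcs (the paper's $2$-handles $H'_i$, built from the disks $\newtilde{C}_i$). That much is correct, and the geometric cancellation of the bands and caps does give an embedded $S^2\times I$.

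However, you misdiagnose where the difficulty lies, and as a result you assert something the construction does not give you. You claim the resulting embedded $3$-manifold $\widetilde\Sigma$ has $\partial\widetilde\Sigma = R\sqcup R'$, but after attaching the bands and caps, the top boundary is only some embedded sphere $R''$ meeting $G$ once transversely; it is not $R$, and no choice of tubing path will make it literally $R$. The paper's real work is in Subsection \ref{mainproof}: showing that $R''$ is the realization of a tubed surface on $R$ (in the sense of Definition \ref{tubeddef} and Construction \ref{realization}), identifying the single tubes with uncrossed Whitney moves and the double tubes with crossed ones, and then invoking Gabai's isotopy result (Proposition \ref{tubeprop}) to conclude $R''$ is isotopic to $R$. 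The $2$-torsion hypothesis is used exactly here, as the input to Proposition \ref{tubeprop}. By contrast, the ``coherent disjointness'' of the caps — which you identify as the main obstacle — is actually trivial: because $G$ has trivial normal bundle, the parallel copies $C_1,\ldots,C_n$ of $G$ can be pushed off $G$ (and off each other) in distinct normal directions, giving pairwise disjoint caps $\newtilde{C}_1,\ldots,\newtilde{C}_n$ with interiors disjoint from $S_+$, with no $\pi_1$ or torsion condition needed. So the gap is not in the construction of the concordance but in identifying its top end: you need Gabai's nontrivial tubed-surface isotopy, not a cap-disjointness argument, and it is there — and only there — that the no-$2$-torsion hypothesis enters.
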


Gabai \cite{gabai} produces a more general version of the 4-dimensional light bulb theorem that may apply even when $\pi_1(X^4)$ has $2$-torsion. To state this theorem, we must first understand regular homotopy of surfaces.

\subsection{Regular homotopy of surfaces in $4$-manifolds}\label{htpysec}

In the 4-dimensional light bulb theorem, one need not distinguish between homotopy and regular homotopy due to the following celebrated theorem of Smale \cite{smale}.

\begin{theorem}[{\cite[Thm. D]{smale}}]\label{smale}
Two smooth embedded $2$-spheres in an orientable $4$-manifold $X^4$ are homotopic if and only if they are regularly homotopic.
\end{theorem}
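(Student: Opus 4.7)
The statement is due to Smale, and its proof uses the Hirsch--Smale $h$-principle. My plan is as follows. First I would apply the Hirsch--Smale theorem, which says that the differentiation map $\t{Imm}(S^2, X^4) \to \t{Mono}(TS^2, TX^4)$ into the space of bundle monomorphisms is a weak equivalence. This reduces the theorem to the following statement: given homotopic embeddings $f_0, f_1 : S^2 \to X^4$ and a homotopy $F : S^2 \times I \to X^4$ between them, the bundle monomorphisms $df_0$ and $df_1$ can be joined by a homotopy of bundle monomorphisms covering $F$.

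Next I would set up the obstruction theory. Such a homotopy of monomorphisms is a section of a fiber bundle over $S^2 \times I$ with fiber $V_2(\R^4) = SO(4)/SO(2)$ (the Stiefel manifold of $2$-frames in $\R^4$), with prescribed section on $S^2 \times \boundary I$ coming from $df_0$ and $df_1$. Since $V_2(\R^4)$ is connected with $\pi_1 = 0$, the only primary obstruction to extension lies in $H^3(S^2 \times I,\, S^2 \times \boundary I;\, \pi_2(V_2(\R^4)))$. Using the fibration $SO(2) \to SO(4) \to V_2(\R^4)$ together with the fact that $\pi_1(SO(2)) = \Z \to \pi_1(SO(4)) = \Z/2$ is surjective with kernel $2\Z$, one computes $\pi_2(V_2(\R^4)) \cong \Z$, so the obstruction group is isomorphic to $\Z$. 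Orientability of $X^4$ is what ensures $F^* TX^4$ is an oriented rank-$4$ bundle, keeping the obstruction abelian and the computation untwisted.

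Finally I would identify this integer obstruction geometrically as the difference $\Omega(f_1) - \Omega(f_0)$ of Smale invariants, where $\Omega(f) \in \Z$ is the signed count of self-intersections of a generic immersion regularly homotopic to $f$. Since embeddings have no self-intersections, $\Omega(f_0) = \Omega(f_1) = 0$, and the obstruction vanishes; hence the desired regular homotopy between $f_0$ and $f_1$ exists.

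The main obstacle will be matching the abstract cohomological obstruction class with the Smale invariant $\Omega$. Smale's original 1959 classification establishes this identification for immersions $S^2 \to \R^n$; since the obstruction is detected locally near self-intersection events in a generic perturbation of $F$, the identification should carry over to an arbitrary orientable $4$-manifold target, with orientability ensuring that the signs of self-intersections are globally well-defined.
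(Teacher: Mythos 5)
The paper does not prove this statement; it simply cites Smale's Theorem D and then remarks that Smale's actual result is the more general one about immersions of $2$-spheres, with an extra restriction. So there is no ``paper's own proof'' to compare against---you are supplying an argument where the paper supplies only a reference.

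As a reconstruction of the standard obstruction-theoretic argument, your sketch is essentially correct. The fiber is right, and the homotopy-group computation is right: from $SO(2)\to SO(4)\to V_2(\R^4)$ one gets $\pi_1(V_2(\R^4))=0$ and $\pi_2(V_2(\R^4))\cong\Z$, and the relative obstruction group $H^3(S^2\times I,S^2\times\partial I;\Z)\cong\Z$. One inaccuracy is the role you assign to orientability: you say it keeps the coefficients untwisted, but $S^2\times I$ is simply connected, so $F^*TX^4$ is orientable and the local system is trivial regardless of whether $X^4$ is orientable. The orientability hypothesis is really needed elsewhere---for instance to make the signs of self-intersection points of an immersed sphere globally well-defined, which is what lets you speak of a $\Z$-valued self-intersection count at all.

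On the crux---identifying the abstract $\Z$-valued obstruction with a concrete invariant---your route via the signed self-intersection count is one valid option, and it is close to what Smale actually does; this is precisely the ``extra restriction'' alluded to in the paper's remark for the general immersion statement. A somewhat cleaner alternative for the embedded case, which bypasses Smale's explicit invariant: for immersions $f$ of $S^2$ in a fixed homotopy class, the normal Euler number $e(\nu f)$ is a complete regular-homotopy invariant (it detects exactly the $\Z$ you computed), and for embeddings the Whitney formula gives $e(\nu f)=[f]\cdot[f]$, which depends only on the homology class. Since $f_0$ and $f_1$ are homotopic they are homologous, so $e(\nu f_0)=e(\nu f_1)$ and the obstruction vanishes. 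Either route closes the gap you flagged as the main obstacle; neither appears in this paper, which leaves the result to Smale.
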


The above theorem is actually stated more generally for immersions of $2$-spheres in $X^4$ (with an extra restriction), but we need only concern ourselves with the statement for homotopy between embedded surfaces in this paper. A similar result of Hirsch \cite{hirsch} holds for homotopic positive-genus surfaces.

\begin{theorem}[{\cite[Thm. 8.3]{hirsch}}]\label{hirsch}
Two smooth embedded orientable genus-$g$ surfaces  in an orientable $4$-manifold $X^4$ are homotopic if and only if they are regularly homotopic.
\end{theorem}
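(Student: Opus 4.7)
The plan is to invoke Hirsch's general immersion theorem, which asserts that the derivative map gives a weak homotopy equivalence from the space of immersions $\t{Imm}(\Sigma_g, X^4)$ to the space of bundle monomorphisms $T\Sigma_g \to TX^4$. In particular, two immersions are regularly homotopic if and only if their tangent maps are homotopic through bundle monomorphisms. The ``only if'' direction of the theorem being immediate (a regular homotopy is a homotopy), this reduces the problem to the following: given a homotopy $F \colon \Sigma_g \times I \to X^4$ from $R$ to $R'$, enrich it to a homotopy of bundle monomorphisms $T\Sigma_g \times I \to F^*TX^4$ restricting to $TR$ and $TR'$ at the two ends.

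Equivalently, I must extend a section of the pullback $F^* V_2^+(TX^4)$ of the oriented $2$-frame Stiefel bundle of $TX^4$, defined on $\Sigma_g \times \{0,1\}$ by the derivatives of the two embeddings, across the cylinder $\Sigma_g \times I$. The fiber is $V_2^+(\R^4) = SO(4)/SO(2) \isom T_1 S^3 \isom S^3 \times S^2$, using that $S^3$ is parallelizable, so $\pi_0 = \pi_1 = 0$ and $\pi_2 \isom \Z$. Standard obstruction theory places the obstructions to extension in $H^{k+1}(\Sigma_g \times I, \Sigma_g \times \{0,1\}; \pi_k(V_2^+(\R^4))) \isom H^k(\Sigma_g; \pi_k(V_2^+(\R^4)))$, so the only potentially nonzero obstruction lives in $H^2(\Sigma_g; \Z) \isom \Z$.

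This single $\Z$-valued obstruction records the difference between the Euler numbers of the normal bundles of $R$ and $R'$, via the $S^2$ factor in $V_2^+(\R^4)$ that controls the oriented rank-$2$ complement $TX^4 / T\Sigma_g$. For an oriented embedded surface in an oriented $4$-manifold this Euler number equals the self-intersection $[R]\dot[R]$, which is a purely homological invariant of $[R] \in H_2(X^4;\Z)$. Since $R$ and $R'$ are homotopic they are homologous, so their normal bundles share the same Euler number, the obstruction vanishes, and the desired bundle monomorphism homotopy exists. Invoking Hirsch's immersion theorem once more produces the sought regular homotopy between $R$ and $R'$.

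The step I expect to be the main obstacle is the precise identification of the degree-$2$ obstruction class with the Euler number difference of the normal bundles; this requires care both about orientation conventions and about tracking which factor of $\pi_2(S^3 \times S^2) \isom \Z$ is being hit. Everything else (the vanishing of $\pi_0$ and $\pi_1$, the Künneth-type collapse of the relative cohomology of a cylinder, and the conversion between bundle monomorphism data and immersions) is clean once Hirsch's immersion theorem is in hand, so the geometric content of the proof really does come down to the fact that self-intersection is a homological invariant.
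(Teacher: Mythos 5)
Your proposal is a valid alternative route to the same result, and both it and the paper ultimately run on the same Smale--Hirsch machinery, but the organization is genuinely different. The paper's proof is a one-line reduction: first isotope the two surfaces to agree on the complement of a disk (matching not only the images of the 1-skeleton but also the normal framing along it, so that the whole problem is localized to a disk $D^2\subset\Sigma_g$ with fixed boundary data), and then apply Hirsch's Lemma~8.1 and Theorem~8.3 to regularly homotope the remaining disk, i.e.\ essentially reduce to the already-proved sphere case. You instead do obstruction theory globally over $\Sigma_g\times I$: the fiber you want is really the space $\t{Mono}(T_p\Sigma_g,T_{F(p,t)}X^4)\simeq V_2(\R^4)$ of injective linear maps rather than the pullback Stiefel bundle $F^*V_2^+(TX^4)$ as written (since $T\Sigma_g$ is not trivializable for $g\neq1$), but that is only a notational slip and the obstruction analysis you run is the correct one for the correct fibration, with $SO(2)\times SO(4)$ connected guaranteeing simple coefficients. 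The identification you flag as the crux is indeed the crux, and it is correct up to a constant: under the $S^1$-fibration $V_2(\R^4)\to\newtilde{G}_2(\R^4)\cong S^2\times S^2$ the image of $\pi_2(V_2(\R^4))\cong\Z$ in $\pi_2(\newtilde{G}_2(\R^4))\cong\Z^2$ is the antidiagonal, and pairing with the Euler class of the complementary tautological bundle is multiplication by $\pm2$ on that $\Z$; so strictly speaking the normal Euler number difference is \emph{twice} the obstruction class, not equal to it. Since $H^2(\Sigma_g;\Z)\cong\Z$ is torsion-free this does not affect your conclusion -- the vanishing of the Euler number difference still forces the obstruction to vanish -- but the phrasing should be adjusted. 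What the paper's reduction-to-a-disk buys is that one never has to spell out the ``obstruction $\leftrightarrow$ Euler number'' dictionary over the whole surface; what your global argument buys is that it makes transparent exactly which homotopy-theoretic fact (injectivity of $\pi_2(V_2(\R^4))\to\Z$ via the complementary Euler class, plus homological invariance of self-intersection) is doing the work.
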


The cited theorem is actually stated for immersed $k$-spheres in $2k$-manifolds, but the proof carries through in this setting as well -- we first isotope the surfaces to agree outside of a disk, and then apply the arguments of \cite[Lemma 8.1 and Theorem 8.3]{hirsch} to regularly homotope the remaining disk.

See \cite{quinn} for more exposition on the following definitions and well-known proposition about regular homotopy.

%

\begin{definition}\label{fingerdef}
Let $S$ be a surface smoothly immersed in $4$-manifold $X^4$. Let $\gamma$ be an arc in $X^4$ with endpoints on $S$ so that $\int{\gamma}\cap S=\emptyset$. Take $\boundary\gamma$ to be far from self-intersections of $S$. A {\emph{finger move along $\gamma$}} is the regular homotopy of $S$ which homotopes one disk component of $\nu(\gamma)\cap S$ along $\gamma$ to create a new pair of oppositely-signed transverse intersections of $S$. See top of Figure \ref{fig:reghomotopy}.

We will usually name this finger move $f_i$ for some index $i$. Then we will write $\gamma_i$ to indicate the arc $\gamma$ along which the finger move takes place.
\end{definition}

\begin{figure}
\includegraphics[width=.65\textwidth]{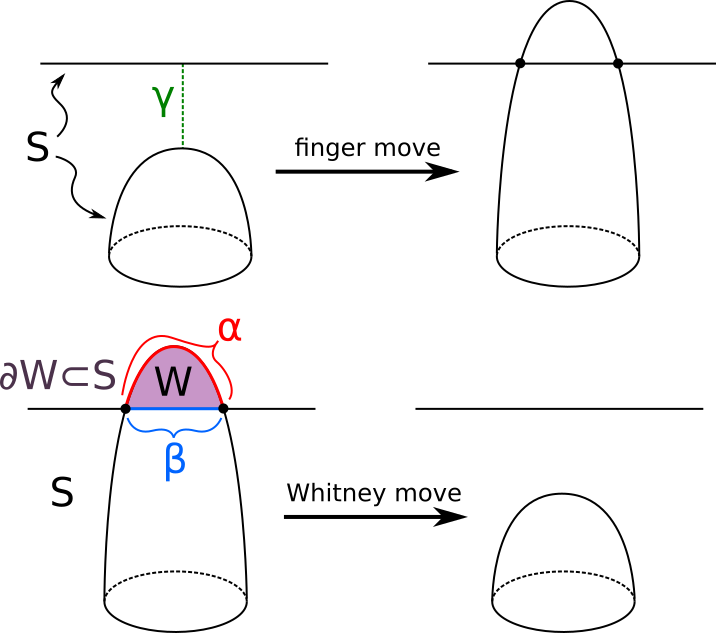}
\caption{Top row: a finger move along arc $\gamma$, as in Def. \ref{fingerdef}. Bottom row: a Whitney move along disk $W$, as in Def. \ref{whitneydef}. We may take $N_\boundary$ to point out of the page.}\label{fig:reghomotopy}
\end{figure}

\begin{definition}\label{whitneydef}
Let $S$ be a surface smoothly immersed in $4$-manifold $X^4$. Let $x$ and $y$ be two distinct self-intersections of $S$, of opposite sign. Let $\alpha$ and $\beta$ be arcs embedded in $S$ from $x$ to $y$ so that $\int{\alpha}$ and $\int{\beta}$ do not meet self-intersections of $S$, and near $x$ and $y$ the arcs $\alpha,\beta$ live in different sheets of $S$. Assume there exists a disk $W$ in $X^4$ with $\boundary W=\alpha\cup\beta$ and $\int{W}\cap S=\emptyset$. Then $\boundary W$ comes with a preferred $1$-dimensional subbundle $N_\boundary$ of its normal bundle ($N_\boundary$ is normal to $W$; along $\alpha$, $N_\boundary$ is parallel to the sheet of $S$ containing $\alpha$; along $\beta$, $N_\boundary$ is normal to the sheet of $S$ containing $\beta$). If a non-vanishing section of $N_\boundary$ extends to the whole normal bundle of $W$ (this is a $\mathbb{Z}/2\mathbb{Z}$ obstruction), then we say that $W$ is a {\emph{Whitney disk}}.

A {\emph{Whitney move along $W$}} is the regular homotopy of $S$ which homotopes the sheet of $S$ containing $\beta$ along $W$ to remove the self-intersections $x$ and $y$. See bottom of Figure \ref{fig:reghomotopy}. (The extension of $N_\boundary$ to the normal bundle of $W$ is a technical requirement to make this move possible.)

We will usually name this Whitney move $w_i$ for some index $i$. Then we will write $W_i$ to indicate the Whitney disk along which this Whitney move takes place.
\end{definition}

\begin{remark}\label{inverseremark}
The finger move and Whitney move are inverse operations. That is, let $f$ be a finger move, so $f$ is a regular homotopy from surface $S$ to surface $S'$, where $S$ has $n$ self-intersections and $S'$ has $n+2$ self intersections. Let $\overline{f}$ be the inverse of $f$, so $\overline{f}$ is a regular homotopy from $S'$ to $S$ which cancels the two self-intersections introduced by $f$. The homotopy $\overline{f}$ is a Whitney move, as implicitly illustrated in Figure \ref{fig:reghomotopy}.

Similarly, when $w$ is a Whitney move from $S$ to $S'$, then the inverse homotopy $\overline{w}$ is a finger move from $S'$ to $S$.
\end{remark}
\begin{notation}
Usually, we will write a Whitney move from $S$ to $S'$ as $w_i$ for some index $i$. Then we will refer to the path along which the finger move $\overline{w_i}$ takes place as $\eta_i$. Recall that $\eta_i$ is an arc in $X^4$ with endpoints on $S'$ with $\int{\eta}_i\cap S'=\emptyset$.
\end{notation}

\begin{proposition}
Let $S$ and $S'$ be smoothly embedded surfaces in the smooth $4$-manifold $X^4$. Suppose $S$ is regularly homotopic to $S'$.  Then up to isotopy, the regular homotopy can be obtained by a finite sequence of finger moves followed by a finite sequence of Whitney moves, with intermediate isotopy at each step.
\end{proposition}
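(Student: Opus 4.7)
The plan is to first put the regular homotopy into a generic form and then reorder its constituent moves so all finger moves precede all Whitney moves. Starting with a regular homotopy $H\colon S\times I\to X^4\times I$, I would perturb $H$ to be generic so that the double-point set of $H$ (viewed as a perturbed immersion of $S\times I$ in $X^4\times I$) is a compact properly embedded $1$-manifold whose projection to the $I$-factor is Morse. Away from the critical values of this projection, the homotopy restricts to an isotopy; at each critical value, a double-point arc is born (finger move) or dies (Whitney move). So $H$ factors, up to isotopy at each step, as a finite alternating sequence of finger moves, Whitney moves, and intermediate isotopies.

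Next I would reorder this sequence using a local swap: any consecutive pair consisting of a Whitney move $w$ followed by a finger move $f$ can be exchanged. Let $W$ be the Whitney disk for $w$ (with boundary on the surface $S$ just before $w$) and $\gamma$ the arc for $f$ (with endpoints on the surface $S'$ just after $w$). The crucial observation is that $S$ and $S'$ coincide outside a small tubular neighborhood $\nu(W)$. First, by connectivity, I would isotope $\gamma$ within $S'$ so that its endpoints lie outside $\nu(W)$; there they also lie on $S$. Second, by the dimension count $\dim \gamma + \dim W = 1+2 = 3 < 4 = \dim X^4$, a generic small perturbation of $\gamma$ rel endpoints in $X^4$ makes $\gamma\cap W=\emptyset$. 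After shrinking to sufficiently small regular neighborhoods, the supports of $f$ and $w$ become disjoint; then the two moves commute up to isotopy, so we can perform $f$ first and $w$ second. Induction on the number of (Whitney before finger) pairs in the current ordering then produces a sequence with every finger move preceding every Whitney move, as desired.

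The main obstacle will be the technical verification that swapping $w$ and $f$ preserves the regular-homotopy structure we need: the double points created by $f$ (now performed first) must be precisely those later cancelled by the same Whitney disk $W$, and the framing condition on $\partial W$ that identifies it as a genuine Whitney disk must still hold at the later time. Both follow from support-disjointness, but require a careful local check. A secondary subtlety is that when $\partial W$ separates the component of $S$ on which it lies, moving the endpoints of $\gamma$ within $S'$ off $\nu(W)$ must be done with care; this can be handled by allowing $\gamma$ to be extended by a short arc along the surface and then absorbed into the ambient isotopy between the two moves.
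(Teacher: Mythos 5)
The paper gives no proof of this proposition; it is flagged as a well-known fact and the reader is referred to Freedman--Quinn. Your argument is the standard one that the cited source essentially contains: make the regular homotopy generic so that its double-point locus is a closed $1$-manifold whose projection to the $I$-factor is Morse, obtaining a finite alternating sequence of finger moves (births) and Whitney moves (deaths) with intermediate isotopy; then push all births before all deaths by commuting any adjacent pair in which a Whitney move $w$ precedes a finger move $f$. The commuting step is correct as you set it up: after sliding the endpoints of the finger arc $\gamma$ off the $4$-ball $\nu(W)$, a generic perturbation of $\int\gamma$ rel endpoints in the complement of the surface misses $W$ by the dimension count $1+2<4$, and then $\nu(\gamma)$ and $\nu(W)$ can be taken disjoint, so the two moves have disjoint supports and commute up to isotopy. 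Induction on the number of out-of-order pairs finishes the reordering, and this is in line with the outline one would extract from \cite{quinn}.

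One sentence in your technical-verification paragraph is off. You write that after the swap the double points created by $f$ ``must be precisely those later cancelled by the same Whitney disk $W$.'' That is not what needs to hold, and in general it does not: $W$ continues to cancel the same pair of double points of $S_1$ that it cancelled before the swap, and these are not the pair created by $f$. The double points introduced by $f$ are cancelled by some later Whitney move in the sequence, not by $W$. What actually requires verification --- and follows immediately from support-disjointness --- is that after the swap $W$ is still a valid Whitney disk for its original pair (its interior remains disjoint from the new intermediate surface, since the finger move changed nothing inside $\nu(W)$, and the framing condition on $\partial W$ is unchanged because $W$ itself is unchanged) and that $\gamma$ remains a valid finger arc when applied to $S_1$ instead of $S_2$ (which holds because $S_1$ and $S_2$ agree outside $\nu(W)$ and $\gamma$ now avoids $\nu(W)$). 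With that correction your argument is complete.
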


\begin{notation}
Let $h$ be a regular homotopy of a surface $R'$ which consists of finger moves $f_1,\ldots, f_n$ followed by Whitney moves $w_1,\ldots, w_n$ with intermediate isotopy. By slight abuse of notation, we will always refer to $f_i$ as a finger move of $R'$ for any $i$.

We will always denote the surface obtained from $R'$ by doing only the finger moves $f_1,\ldots, f_n$ by $S$.
For $i=2,\ldots, n$, we let $S_i$ denote the surface obtained from $S$ by performing Whitney moves $w_1,\ldots, w_{i-1}$. Then $w_i$ is a regular homotopy of $S_i$, so $\boundary W_i\subset S_i$. Note by dimensionality that $W_i$ does not intersect $\eta_j$ for any $j<i$.
\end{notation}

\subsection{When the ambient $4$-manifold has nontrivial $2$-torsion in its fundamental group.}\label{thmstatement}
We now move onto specific definitions required to parse the statement of the generalized 4-dimensional light bulb theorem. From now on, let $A$ abstractly be the $2$-sphere. Given a sphere $Y$ embedded or immersed in $X^4$, let $\pi_Y:A\to X^4$ be the actual embedding or immersion.

\begin{remark}\label{remark2.2}
Let $S$ be a $2$-sphere embedded in $X^4$. Fix points $x$ and $y$ in $S$. Let $\gamma$ be an arc in $X^4$ from $x$ to $y$ with $\int{\gamma}\cap S=\emptyset$. Fix a point $z$ in $S$. Let $\gamma_{zx},\gamma_{yz}$ be arcs in $S$ from $z$ to $x$ and $y$ to $z$, respectively. Then $\gamma$ uniquely determines the element of $\pi_1(X^4,z)$ represented by $\gamma_{zx}\gamma\gamma_{yz}$. We write $[\gamma]$ to indicate this element of $\pi_1(X^4,z)$.
\end{remark}

\begin{definition}\label{labeldef}
Let $R$ and $R'$ be regularly homotopic $2$-spheres in $X^4$. Say that some regular homotopy $h$ from $R'$ to $R$ consists of the finger moves $f_1,\ldots, f_n$ followed by the Whitney moves $w_1,\ldots, w_n$ (with intermediate isotopies). Let $S$ be the surface obtained from $R'$ after performing the finger moves $f_1,\ldots, f_n$, so $S$ is a $2$-sphere immersed in $X^4$ with $2n$ points of self-intersection.

Let $(x_1,y_1),\ldots,(x_{2n},y_{2n})$ be pairs of distinct points in $A$ mapping to distinct self-intersections of $S$, so $\pi_S(x_i)=\pi_S(y_i)$. We take $\pi_{S_j}(x_i)=\pi_{S_j}(y_i)$ if $\pi_S(x_i)=\pi_S(y_i)$ is not cancelled by one of the Whitney moves $w_1,\ldots, w_{j-1}$. For each finger move $f_i$, there exist two distinct $i_1,i_2$ so that $\pi_S(x_{i_1},x_{i_2},y_{i_1},y_{i_2})$ lie in the support of $f_i$ (i.e. are the two self-intersections introduced by $f_i$). Choose the labelings of each pair $(x_j, y_j)$ so that $x_{i_1}$ and $x_{i_2}$ lie in the same sheet of $S$ in this support, while $y_{i_1}$ and $y_{i_2}$ lie on the other.

We now refer to $L=(L_x,L_y)$ as a {\emph{labeling}} of $h$, where $L_x=\{x_1,\ldots, x_n\}, L_y=\{y_1,\ldots, y_n\}$. There are $2^n$ distinct labelings of $h$.
\end{definition}

\begin{definition}[{\cite[Def. 5.15]{gabai}}]\label{crosseddef}
Let $R$ and $R'$ be regularly homotopic $2$-spheres in $X^4$. Let $h$ be a regular homotopy from $R'$ to $R$ consisting of the finger moves $f_1,\ldots, f_n$ followed by the Whitney moves $w_1,\ldots, w_n$ (with intermediate isotopies). Let $L=(L_x,L_y)$ be a labeling of $h$, where $L_x=\{x_1,\ldots,x_n\}, L_y=\{y_1,\ldots, y_n\}\subset A$.

Let $W_i$ be the Whitney disk associated to Whitney move $w_i$, as in Definition \ref{whitneydef}. Then $\pi_{S_i}^{-1}(\boundary W_i)$ consists of two arcs $\boundary_i^1$ and $\boundary_i^2$, whose four boundary points collectively consist of two points in $L_x$ and two points in $L_y$.

If one of $\boundary_i^1,\boundary_i^2$ connects two points in $L_x$ while the other connects two points in $L_y$, then we say that $w_i$ is {\emph{uncrossed}} with respect to $L$. If each of $\boundary_i^1,\boundary_i^2$ meet both $L_x$ and $L_y$, then we say that $w_i$ is {\emph{crossed}} with respect to $L$.
\end{definition}

Now we are able to state the general 4-dimensional light bulb theorem.

\begin{theorem}[4-dimensional light bulb theorem, {\cite[Thm. 1.3]{gabai}}]\label{general4d}

Let $X^4$ be an orientable $4$-manifold. Let $R$ and $R'$ be $2$-spheres embedded in $X^4$ so that $R$ and $R'$ have a mutual transverse sphere $G$ and $R$ is homotopic to $R'$.

Let $h$ be a regular homotopy (via Thm. \ref{smale}) from $R'$ to $R$ which consists of a sequence of finger moves $f_1,\ldots, f_n$ followed by Whitney moves $w_1,\ldots, w_n$ (with intermediate isotopies). Choose a labeling $L$ of $h$. 

Let $\eta_i$ be the path along which the finger move $\overline{w_i}$ takes place, as in Remark \ref{inverseremark}. By Remark \ref{remark2.2}, each $\eta_i$ represents an element $[\eta_i]$ of $\pi_1(X^4)$ with basepoint on $R$. Let $\mathcal{H}$ be the multiset $\{[\eta_i]\mid w_i$ is crossed with respect to $L\}$. 

If each $2$-torsion element of $\pi_1(X^4)$ appears an even number of times in $\mathcal{H}$, then $R'$ is isotopic to $R$.
\end{theorem}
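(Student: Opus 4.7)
The plan is to modify the given regular homotopy $h$ into an ambient isotopy. I would reverse the roles of the finger and Whitney moves: starting from $R$, apply the inverse Whitney moves $\overline{w_i}$ (which are finger moves along the arcs $\eta_i$) to produce an immersed sphere that agrees with $S$ up to isotopy; then $R'$ is recovered by performing the inverse finger moves $\overline{f_i}$, which are Whitney moves along certain disks. Thus, to prove the theorem it suffices to exhibit, for each of the $n$ pairs of double points introduced by the $\eta_i$ finger moves, an embedded Whitney disk disjoint from the surface and from the other Whitney disks, with correct framing. Once we have this, every Whitney move becomes an ambient isotopy and we obtain an isotopy from $R'$ to $R$.

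First I would choose natural \emph{combinatorial} Whitney disks $W_i$ — essentially running one sheet back along $\eta_i$ to the other — paired using the labeling $L$. Generically these disks meet $R$, $G$, and each other. Since $G$ is a transverse sphere to $R$, every intersection $W_i\cap R$ can be tubed off along a parallel copy of $G$, pushing the intersection into $G$ and using the nullhomotopy of a meridian of $R$ in $X^4\setminus R$ to clean up. Likewise, transverse intersections $W_i\cap W_j$ can be removed by tubing one disk along $G$. The outcome is a system of \emph{tubed Whitney disks}, as in subsection \ref{tubedsec}, which are no longer literally embedded disjoint disks but can be manipulated formally.

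The heart of the argument is controlling Whitney framings and residual algebraic intersections. For each crossed Whitney disk $W_i$, tubing along $G$ disturbs the Whitney framing by a $\Z/2$ obstruction determined by $[\eta_i]\in\pi_1(X^4)$; for uncrossed $W_i$ there is no such obstruction. I would then pair any two crossed Whitney disks whose associated group elements agree and are $2$-torsion: a Norman-trick-style maneuver (tubing one disk along a meridian of $R$ and guiding it around a loop representing the shared $2$-torsion class so as to meet the second disk) lets us cancel both framing errors simultaneously without introducing new ones. The hypothesis that every $2$-torsion element appears an even number of times in $\mathcal{H}$ is exactly what permits us to pair up all the crossed disks in this way.

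After these moves, all Whitney disks are embedded, pairwise disjoint, disjoint from $S$, and correctly framed, so each Whitney move is realized by ambient isotopy; concatenated with the prescribed isotopy from $R'$ onward, this yields an isotopy $R'\simeq R$. The main obstacle is the third step: bookkeeping how tubing along $G$ affects the Whitney framing, verifying that pairing crossed disks with matched $2$-torsion classes genuinely cancels the framing obstruction, and ensuring that the cancelling maneuver does not leave residual algebraic intersections among the $W_i$ or with $S$ that cannot themselves be tubed away. In the absence of $2$-torsion — the setting of Theorem \ref{easytheorem} — the pairing step is vacuous because every crossed Whitney disk's framing obstruction already vanishes, which is why that case is cleaner.
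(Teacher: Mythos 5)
The paper does not actually prove Theorem \ref{general4d}: it is cited directly from Gabai \cite{gabai}, and the author explicitly notes that she has only repackaged Gabai's statement (originally split between a normal--form theorem and a separate isotopy criterion) into a single theorem for later convenience. The tubed-surface material of Subsection \ref{tubedsec} and Proposition \ref{tubeprop} are likewise imported from \cite{gabai} without proof. So there is no proof in this paper to compare against; your sketch is necessarily an outline of Gabai's argument. The high-level shape you describe is recognizable and correct as far as it goes: reorganize the regular homotopy so that one must exhibit embedded, pairwise disjoint, correctly framed Whitney disks for the double points introduced by the moves $\overline{w_i}$ along $\eta_i$, and then use parallel push-offs of the transverse sphere $G$ to tube the na\"ive Whitney disks off $S$ and off one another, with the tubed-surface formalism of Subsection \ref{tubedsec} as the bookkeeping device.

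Where the sketch goes wrong is in the diagnosis of the $2$-torsion obstruction, which is the entire content of the theorem. You assert that for each crossed $W_i$, ``tubing along $G$ disturbs the Whitney framing by a $\Z/2$ obstruction determined by $[\eta_i]\in\pi_1(X^4)$.'' This is not tenable. The Whitney framing defect of a disk is a single element of $\Z/2$, not a quantity indexed by $\pi_1$, and since $G$ is an \emph{embedded sphere with trivial normal bundle}, tubing a Whitney disk along a parallel push-off of $G$ changes its framing obstruction by the normal Euler number of $G$, which is zero --- i.e.\ not at all. If the failure mode really were a per-disk $\Z/2$ framing defect, the hypothesis of the theorem would only need the \emph{total} number of offending crossed $w_i$ to be even; instead, the hypothesis requires that \emph{each $2$-torsion element separately} appear an even number of times in $\mathcal{H}$. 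That finer condition reflects what actually happens in Gabai's proof (and is exactly what this paper exploits in Subsection \ref{mainproof}): after normal form, one sees $R'$ as the realization of a tubed surface on $R$ whose \emph{double tubes} run along the $\eta_i$ with $w_i$ crossed (uncrossed $w_i$ give only \emph{single} tubes, which can always be removed --- that is the real content of the crossed/uncrossed dichotomy, not a framing statement). The obstruction is to cancelling double tubes \emph{in pairs}, and a pair can be cancelled when the two arcs represent the same class in $\pi_1(X^4)$. Non-$2$-torsion classes can be dealt with individually, but a double tube along an arc in a $2$-torsion class can only be killed against another double tube in the \emph{same} class, which is why that class must occur with even multiplicity. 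Your ``Norman-trick-style maneuver'' gestures in this direction but does not explain why the pairing must respect $\pi_1$-classes, and the framing-defect picture you lead with would predict a weaker (and incorrect) hypothesis. To turn this into a faithful account of Gabai's proof you would need to discard the framing mechanism and replace it with the double-tube cancellation lemma.
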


From now on, when we refer to the ``4-dimensional light bulb theorem,'' we mean the statement of Theorem \ref{general4d}. When $\pi_1(X^4)$ has no $2$-torsion, this restricts to the statement of Theorem \ref{easy4dlight}.

The statement we give here of Theorem \ref{general4d} does not exactly match the statement given in \cite{gabai}; we imagine a reader referring from this paper to \cite{gabai} for the first time might be confused by the difference. Gabai states Theorem \ref{general4d} in terms of putting $R'$ into a normal form with respect to $R$ and then later gives the exact obstruction to this normally positioned surface being isotopic to $R$. Here, we have pulled back the whole result into one statement to make later discussion easier. 

We extend Theorem \ref{easytheorem} accordingly.

\begin{maintheorem}

Let $X^4$ be an orientable $4$-manifold. Let $R$ and $R'$ be $2$-spheres embedded in $X^4$ so that $R$ has a transverse sphere $G$ and $R$ is homotopic to $R'$.

Let $h$ be a regular homotopy (via Thm. \ref{smale}) from $R'$ to $R$ which consists of a sequence of finger moves $f_1,\ldots, f_n$ followed by Whitney moves $w_1,\ldots, w_n$ (with intermediate isotopies). Choose a labeling $L$ of $h$.

Let $\eta_i$ be the path along which the finger move $\overline{w_i}$ takes place, as in Remark \ref{inverseremark}. By Remark \ref{remark2.2}, each $\eta_i$ represents an element of $\pi_1(X^4)$ with basepoint on $R$. Let $\mathcal{H}$ be the multiset $\{[\eta_i]\mid w_i$ is crossed with respect to $L\}$. 

If each $2$-torsion element of $\pi_1(X^4)$ appears an even number of times in $\mathcal{H}$, then $R'$ is concordant to $R$.

\end{maintheorem}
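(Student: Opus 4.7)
My plan is to follow the 3-dimensional concordance analogue of Section \ref{3dsection}, upgraded to four dimensions using the regular homotopy supplied by Theorem \ref{smale}. Given a regular homotopy $h$ from $R'$ to $R$ with finger moves $f_1,\ldots,f_n$ and Whitney moves $w_1,\ldots,w_n$, I build a concordance in $X^4\times I$ as follows. Trace the finger moves for $t\in[0,1/3]$ to obtain at time $1/3$ the immersed sphere $S$ with $2n$ transverse double points. At $t=1/2$, resolve each double point with a small vertical $1$-handle in $X^4\times I$ (the tubed-surface construction of Section \ref{tubedsec}), producing an embedded genus-$2n$ surface $\widetilde{S}$, so that the trace for $t\in[0,1/2]$ is an embedded cobordism with $2n$ index-$1$ critical points.

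For $t\in[1/2,1]$ I need $2n$ disjoint embedded capping disks whose boundaries form a geometric basis for $H_1(\widetilde{S})$, so that attaching them as $2$-handles kills all the genus and yields a product cobordism up to $R$. Each uncrossed Whitney disk $W_i$ supplies one such disk: the uncrossed condition on the labeling $L$ is exactly what forces the boundary arcs of $W_i$ to close up into a tube-meridian cycle on $\widetilde{S}$, and $W_i$ itself (slightly pushed into $X^4\times[1/2,1]$) caps that cycle. For the remaining $n$ caps I build meridian disks from parallel copies of the transverse sphere $G$: using the triviality of $G$'s normal bundle, I push off a copy of $G$, tube it to a chosen tube meridian along an arc in $R$, and shift into $X^4\times(1/2,1]$. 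Arbitrarily many disjoint parallel copies of $G$ are available, so these caps can be arranged disjointly from one another and from the Whitney-disk caps.

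The main obstacle is the crossed Whitney moves, for which $W_i$ does not naturally cap a tube-meridian cycle on $\widetilde{S}$. My plan is to uncross $W_i$ by band-summing with a strip built from pushoffs of $G$ routed along $\eta_i$, producing an embedded capping disk provided the associated $\pi_1$-bookkeeping works out. When $[\eta_i]$ has infinite or odd finite order (so $[\eta_i]\neq[\eta_i]^{-1}$), the single-disk modification closes up embedded with no obstruction. When $[\eta_i]$ is $2$-torsion, the modification of a single $W_i$ picks up a $\mathbb{Z}/2$ obstruction because $\eta_i$ represents the same $\pi_1$-element as its reverse; the evenness hypothesis is precisely what lets us pair up crossed Whitney moves with a common $2$-torsion class in $\mathcal{H}$ and modify them jointly so the two $\mathbb{Z}/2$ obstructions cancel, yielding honest embedded capping disks for both. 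I expect the hardest step to be verifying that the modified capping disks, the meridian caps, and the tubes can all be arranged mutually disjointly so that the final surface is genuinely $S^2\times I$ and not a higher-genus cobordism; this should reduce, via the extra $I$-factor of room, to a careful application of Gabai's tubed-surface normal forms.
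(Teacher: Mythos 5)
Your proposal shares some ingredients with the paper's proof (attaching tubes to the immersed surface $S$, capping tube meridians with pushoffs of $G$ routed along arcs in $R$), but it diverges in a way that creates real gaps. The paper does not resolve each of the $2n$ double points separately. Instead it attaches one tube $T_i$ per finger move $f_i$, connecting the two double points created by that finger move, producing a genus-$n$ surface $S_+$. It then caps all $n$ tube meridians with $n$ disks $\newtilde{C}_i$, each built by tubing a pushoff of $G$ along an arc $\alpha_i\subset R$ -- exactly the ``meridian disk'' construction you describe. These $n$ $2$-handles geometrically cancel the $n$ $1$-handles, giving an honest concordance from $R'$ to a new embedded sphere $R''$ with $R''\cap G=\pt$. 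Crucially, the Whitney disks $W_i$ play no role whatsoever in building the concordance; they enter only afterward, in the separate step of showing $R''$ is the realization of a tubed surface on $R$, at which point Gabai's Proposition \ref{tubeprop} is invoked as a black box to conclude $R''$ is isotopic to $R$.

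The specific gap in your plan is the assertion that an uncrossed Whitney disk caps a tube-meridian cycle on the resolved surface $\widetilde S$. It does not: $\pi_{S_i}^{-1}(\bdy W_i)$ consists of two arcs whose four endpoints lie at two distinct double points of $S$, and those two double points generally do not come from the same finger move. After resolution, $\bdy W_i$ becomes a curve that travels through two separate tubes, not a single meridian. Moreover, $\bdy W_i$ lives on $S_i$ (the surface after $w_1,\ldots,w_{i-1}$ have already been performed), not on $\widetilde S$, so even as a set there is work to be done before it can be interpreted as a curve on your genus-$2n$ surface. Finally, your sketch of uncrossing the crossed $W_i$'s by band-summing with $G$-pushoffs along $\eta_i$, and cancelling the $\mathbb{Z}/2$ obstructions in pairs using the evenness hypothesis, is precisely the hard content of Gabai's Proposition \ref{tubeprop}. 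Trying to carry that out simultaneously with the concordance construction forces you to re-derive that result rather than cite it, and makes the disjointness bookkeeping at the end genuinely unmanageable. The paper's two-step structure -- a cheap concordance to an auxiliary sphere $R''$ using only $G$-pushoff caps, then isotopy of $R''$ to $R$ by the already-established light bulb machinery -- is what makes the argument work.
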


To prove Theorem \ref{maintheorem}, we must understand some details of the proof of the $4$-dimensional light bulb theorem.

\subsection{Tubed surfaces}\label{tubedsec}

In \cite{gabai}, Gabai defines the class of {\emph{tubed surfaces}}, which are defined by attaching tubes to embedded surfaces in a prescribed way.

\begin{definition}[{\cite[Def. 5.4]{gabai}}]\label{frameddef}
A {\emph{framed embedded path}} is a smooth embedded path $\tau:I\to X^4$ with a framing $(\nu_1(t),\nu_2(t),\nu_3(t))$ of its normal bundle. Let $C(t)$ be a circle bounding a disk centered at $\tau(t)$ in the plane spanned by $\nu_1(t),\nu_2(t)$. Take each $C(t)$ to have small radius. We call the annulus $\cup_{t\in [0,1]} C(t)$ the {\emph{cylinder from $C(0)$ to $C(1)$}}.
\end{definition}

\begin{remark}\label{fingerframed}
In the definition of a finger move $f$ along $\gamma$ of an immersed surface $S$, the path $\gamma$ is actually a framed embedded path. The framing on $\gamma$ is chosen so that $C(0)\subset S$ and $C(1)$ intersects $S$ in two points. The result $S'$ of the finger move can be obtained from $S$ by deleting the disk in $S$ bounded by $C(0)$ which contains $\gamma(0)$, then attaching the cylinder from $C(0)$ to $C(1)$ and a small disk $D$ bounded by $C(1)$ chosen so that $\int{D}\cap S=\emptyset$ and the resulting surface has transverse self-intersections.
\end{remark}

\begin{definition}[{\cite[Def. 5.5]{gabai}}]\label{tubeddef}
Let $S$ be an immersed surface in $X^4$. Fix a transverse sphere $G$ for $S$. Say $S$ has $n$ points of self-intersection, so there are $2n$ distinct points $x_1,\ldots,x_n,y_1,\ldots,y_n\in A$ with $\pi_S(x_i)=\pi_S(y_i)$ for each $i$. Let $z_0=\pi_S^{-1}(z)$. A {\emph{tubed surface}} $S_T$ on $S$ consists of the following data:
\begin{enumerate}[i)]
\item The immersion $\pi_S:A\to X^4$.
\item For each $i=1,\ldots, n$, an immersed path $\sigma_i\subset A$ from $x_i$ to $z_0$.
\item Immersed paths $\alpha_1,\ldots,\alpha_r$ in $A$ with both endpoints at $z_0$ and for each $i=1,\ldots,r$, pairs of points $(p_i,q_i)$ in $A$ with $p_i\in\int{\alpha}_i$ and a framed embedded path $\tau_i\subset X^4$ from $\pi_S(p_i)$ to $\pi_S(q_i)$ with $\int{\tau}_i\cap(G\cup S)=\emptyset$.
\item Pairs of immersed paths $(\beta_1,\gamma_1),\ldots,(\beta_s,\gamma_s)$ in $A$ where $\beta_i$ goes from $z_0$ to $b_i$ and $\gamma_i$ goes from $g_i$ to $z_0$ (for some $b_i,g_i\in A$) and framed embedded paths $\eta_i$ from $\pi_S(b_i)$ to $\pi_S(g_i)$ with $\int{\eta}_i\cap(G\cup S)=\emptyset$.
\end{enumerate}

The union of all arcs  $\sigma_i,\alpha_j,\beta_k,\gamma_l,\tau_p,\eta_q$  is called the {\emph{tube guide locus}} of $S_T$.

We require that the $\sigma_i,\alpha_j,\beta_k,\gamma_l$ curves be self-transverse and transverse to each other, and that their interiors not meet any points of the form $x_i,q_j,b_k,g_l$, and also be disjoint from $z_0$ and the $p_i$ points except as specified. At crossings of these curves, one sheet should be labeled as above or below the other sheet (as in a crossing in a standard knot diagram). The points of the form $x_i,y_j,p_k,q_l,b_m,g_n$ are all distinct (and distinct from $z_0$).

The curves $\tau_i$ and $\eta_j$ are pairwise disjoint. We require they be normal to $S$ near their boundaries. Recall that for each framed arc $\tau_i$ or $\eta_i$, we defined circles $C(0)$ and $C(1)$ near their boundaries in Definition \ref{frameddef}. We restrict the allowed framings on $\tau_i$ and $\eta_j$, but do not state this condition until Construction \ref{realization}.

We say that $S$ is the underlying surface of $S_T$.
\end{definition}


From a tubed surface $S_T$ on $S$ we can construct an embedded surface.

\begin{construction}[{\cite[Construction 5.7]{gabai}}]\label{realization}
Let $S_T$ be a tubed surface on $S$. From $S_T$, we construct an embedded surface $S_R$ called the {\emph{realization}} of $S_T$ as follows (see Fig. \ref{fig:tubedsurface} for an illustration that is likely more helpful than the ensuing wall of text):

\begin{enumerate}[i)]
\item For each $i$, remove from $S$ a disk $\pi_S(\nu(y_i))$. Attach to this new boundary component a disk $D(\sigma_i)$ consisting of a tube that follows $\pi_S(\sigma_i)$ and connects to a copy of $G\setminus\int{\nu}(z)$, pushed slightly off $G$. 
\item For each $\alpha_i$ arc, let $P(\alpha_i)$ be a $2$-sphere obtained by attaching a copy of $G\setminus\int {\nu}(z)$ to each end of a tube following $\pi_S(\alpha_i)$, and pushing the copies of $G$ slightly off $G$ (and each other). The restriction on the framing of $\tau_i$ mentioned in Definition \ref{tubeddef} is that if $C(0)$ and $C(1)$ are the circles near $\tau_i(0)$ and $\tau_i(1)$ as in Definition \ref{frameddef}, then we require $C(0)$ to lie in $P(\alpha_i)$ and $C(1)$ to lie in $S$. Delete open disks in $P(\alpha_i)$ and $S$ bounded by $C(0)$ and $C(1)$ respectively and glue the resulting punctured surfaces together via the cylinder from $C(0)$ to $C(1)$ (as in Def. \ref{frameddef}). This yields an embedded surface $\newtilde{S}$. We call the cylinders around $\tau_i$ a {\emph{single tube}}.
\item Now for each $\eta_i$ arc, construct disks $D(\beta_i)$ and $D(\gamma_i)$ consisting of copies of $G\setminus(\int{\nu}(z))$ (pushed slightly off $G$ and each other) with collars parallel to $\pi_S(\beta_i)$ and $\pi_S(\gamma_i)$ respectively, so the boundary of $D(\beta_i)$ lies in a disk normal to $S$ at $\pi_S(b_i)$ and the boundary of $D(\gamma_i)$ lies in a disk normal to $S$ at $\pi_S(g_i)$. Fix $4$-balls $N_{b_i}$ and $N_{g_i}$ about $\pi_S(b_i)$ and $\pi_S(g_i)$ so that $\boundary N_{b_i}\cap(\newtilde{S}\cap D(\beta_i))$ is a Hopf link in the $3$-sphere $\boundary N_{b_i}$, and similarly $\boundary N_{g_i}\cap(\newtilde{S}\cap D(\gamma_i))$ is a Hopf link in the $3$-sphere $\boundary N_{b_i}$. 

 The restriction on the framing of $\eta_i$ mentioned in Definition \ref{tubeddef} is that if $C(0)$ and $C(1)$ are the circles near $\tau_i(0)$ and $\tau_i(1)$ as in Definition \ref{frameddef}, then we require $C(0)$ to lie in $\boundary N_{b_i}\cap\newtilde{S}$ and $C(1)$ to be $\boundary D(\eta_i)$. Let $x(t)\in C(0)$ be the point in direction $\nu_1(t)$ in the framing of $\eta_i$. (see Def. \ref{frameddef})
 
 Connect the specified Hopf links by two tubes parallel to $\eta_i$. One tube is the cylinder from $C(0)$ to $C(1)$ and connects $\boundary N_{b_i}\cap\newtilde{S}$ to $\boundary  D(\eta_i)$. The other tube is centered around $\cup_t x(t)$ and connects $\boundary  D(\beta_i)$ to $\boundary N_{g_i}\cap\newtilde{S}$. We call these two tubes together a  {\emph{double tube}}. The resulting embedded surface is $S_R$.
\end{enumerate}

At each stage, whenever two tube segments correspond to arcs of the tube guide locus which intersect in $A$, take the tube corresponding to the ``under'' segment to have smaller radius and thus lie closer to $S$, to avoid self-intersections of $S_R$. This is a slight abuse of notation, as one arc of the tube guide locus in $A$ may cross itself -- but simply take the piece of the tube corresponding to the ``under'' segment to be narrow.

We illustrate this construction in Figure \ref{fig:tubedsurface}.
\end{construction}

\begin{figure}
\includegraphics[width=.85\textwidth]{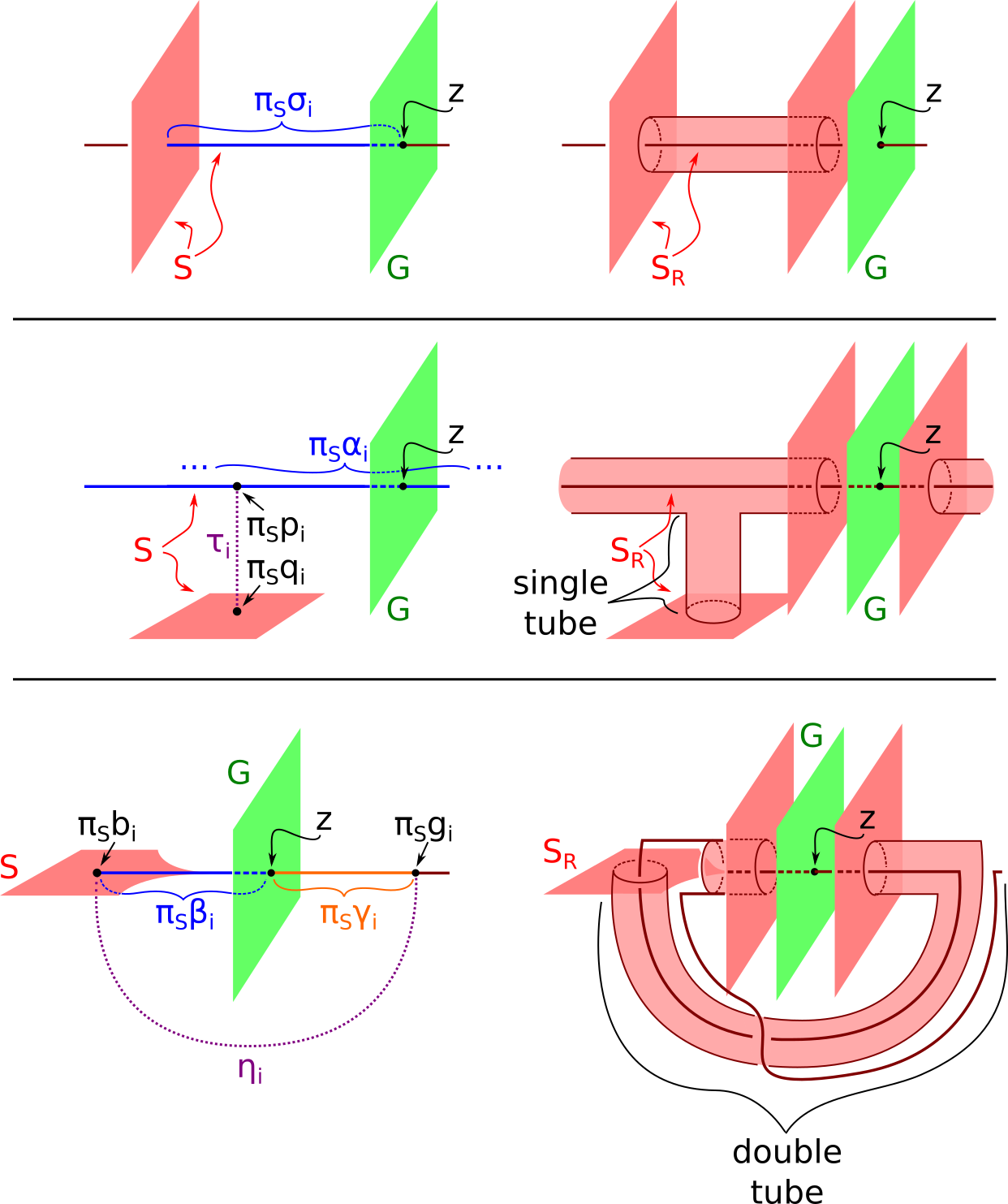}
\caption{Constructing the realization $S_R$ of a tubed surface on $S$. On the left, we draw a schematic of a $\sigma_i, \tau_i,$ or $\eta_i$ curve in the tubed surface tube guide locus. On the right, we draw the corresponding piece of $S_R$, as described in Construction \ref{realization}.}\label{fig:tubedsurface}
\end{figure}

A major part of the proof of the $4$-dimensional light bulb theorem is the following proposition.

\begin{proposition}[\cite{gabai}]\label{tubeprop}
Let $S_T$ be a tubed surface on $S$, where $S$ is a $2$-sphere embedded in $X^4$. Suppose that for each element $[\gamma]$ of $2$-torsion in $\pi_1(X^4)$, $[\gamma]$ appears an even number of times in the list $[\eta_1],\ldots,[\eta_k]$ where $\eta_1,\ldots,\eta_k$ are as in Definition \ref{tubeddef} (in words, $\eta_1,\ldots,\eta_k$ are the arcs yielding double tubes of $S_R$; recall by Remark \ref{remark2.2} that $[\eta_i]$ is an element of $\pi_1(X^4)$ with basepoint in $S$). Then $S_R$ is isotopic to $S$.
\end{proposition}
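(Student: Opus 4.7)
The plan is to eliminate each class of tube appearing in $S_R$ separately, using the transverse sphere $G$ to absorb the isotopic cost, with the double tubes being the only contributions where an obstruction can appear.

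First I would remove the $\sigma_i$-tubes. Each $\sigma_i$-tube modifies $S$ near $y_i$ by deleting a disk and attaching a tube along $\pi_S(\sigma_i)$ to a pushed-off parallel copy of $G\setminus\int{\nu}(z)$. Because $G$ is a transverse sphere to $S$, a Norman-style isotopy slides the parallel $G$-copy back across $G$ itself, dragging the tube with it, and restores the original disk. After this, assume no $\sigma_i$ arcs remain. Next, I would eliminate the single tubes coming from the $\tau_i$ arcs. Each single tube joins $\newtilde{S}$ to a sphere $P(\alpha_i)$ built from two parallel copies of $G$ tubed together along $\alpha_i$; such $P(\alpha_i)$ may be isotoped (again using $G$ as a transverse sphere, plus the fact that its self-intersection is zero) to a tiny sphere bounding a $3$-ball disjoint from the rest of $S_R$, after which the single tube collapses under the isotopy that shrinks $P(\alpha_i)$ through this bounding $3$-ball.

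The main work, and the expected main obstacle, is the double tubes. Each double tube along $\eta_i$ contributes two parallel tubes joining two Hopf link patterns. My plan is as follows: when $[\eta_i]$ is trivial in $\pi_1(X^4)$ or not of order $2$, I would produce an ambient isotopy that unhooks a single double tube on its own, using $G$ to provide a $2$-sphere over which one Hopf circle can be swept around the loop represented by $\eta_i$ and then shrunk, the key point being that the self-homotopy of $S$ produced by traveling around a non-$2$-torsion loop can itself be undone by a further slide over $G$. When $[\eta_i]$ has order exactly $2$, the analogous attempt produces a $\Z/2\Z$ sign obstruction, so only pairs of double tubes with equal $2$-torsion labels can be canceled together, via a simultaneous braiding isotopy of their Hopf link patterns through a common region near $G$. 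The hypothesis that each $2$-torsion element appears evenly in $[\eta_1],\ldots,[\eta_k]$ is precisely what is needed to match every obstructed double tube into a cancelling pair.

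The chief difficulty is making the pair-cancellation isotopy explicit while keeping the tubes disjoint from each other, from $S$, and from $G$, and verifying that the $\Z/2\Z$ obstruction really does vanish for matched pairs. I would plan to model this on Gabai's Normal Form arguments in \cite{gabai}, where the analogous cancellations are carried out inside an immersed Whitney-disk neighborhood, and where the tubes for paired $2$-torsion classes can be combined into a single orientable embedded Whitney disk whose interior can then be pushed off $S$ using $G$.
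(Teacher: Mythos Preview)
The paper does not give its own proof of this proposition: it is stated with attribution to \cite{gabai} and used as a black box in Section~\ref{proofsec}. So there is nothing in the present paper to compare your argument against beyond the citation itself.

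That said, a few comments on your sketch relative to what actually happens in \cite{gabai}. First, your opening step is vacuous here: the hypothesis is that $S$ is \emph{embedded}, so by Definition~\ref{tubeddef} there are no self-intersections and hence no $\sigma_i$ curves at all. This is harmless but suggests you should reread the hypotheses. Second, your treatment of the single tubes is too optimistic as stated. You propose to isotope $P(\alpha_i)$ to a small sphere bounding a $3$-ball disjoint from $S_R$ and then ``collapse'' the single tube; but the framed path $\tau_i$ may represent a nontrivial element of $\pi_1(X^4)$, and the sphere $P(\alpha_i)$ is attached to $S$ through this tube, so one cannot simply shrink it away without moving the tube. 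In \cite{gabai} this is handled not by shrinking $P(\alpha_i)$ directly but by a sequence of moves on the tube guide locus (sliding tubes over $G$, reordering crossings, and the ``tube swap'') that preserve the isotopy class of $S_R$ and eventually eliminate each $\tau_i$. Your description does not supply this mechanism. Third, for the double tubes your high-level picture is correct---non-$2$-torsion labels can be removed individually, and $2$-torsion labels must be cancelled in pairs---and you correctly identify that this is where the hypothesis enters. But the actual cancellation in \cite{gabai} is again carried out via explicit moves on the tube guide data rather than the ``braiding isotopy'' you describe, and you yourself defer the details back to \cite{gabai}.

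In short: since the paper only cites the result, your outline is acceptable as a pointer to \cite{gabai}, but as a standalone proof it has a genuine gap in the single-tube step and is only a heuristic for the double-tube step.
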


\section{Proof of Theorems \ref{easytheorem} and \ref{maintheorem}}\label{proofsec}

A very basic outline for the proof of Theorems \ref{easytheorem} and \ref{maintheorem} is as follows:
\begin{enumerate}[1.]
\item In $X^4\times I$, build a cobordism from $R'$ to a positive-genus surface $S_+$ by attaching $3$-dimensional $1$-handles to $R'\times I$.
\item Attach geometrically cancelling $3$-dimensional $2$-handles to the above cobordism to find a concordance from $R'$ to $R''$, where $R''$ is a sphere homotopic to $R'$ (and $R$) and $R''\cap G=\pt$. 
\item Argue that $R''$ is the realization of a tubed surface on $R$.
\item Apply Proposition \ref{tubeprop} to conclude that $R''$ is isotopic to $R$ given the hypotheses of Theorem \ref{maintheorem}.
\end{enumerate}


\subsection{Construction of a concordance from $R'$}\label{easyproof}

Let $z:=R\cap G$. Recall that $h$ is a regular homotopy from $R'$ to $R$ consisting of finger moves $f_1,\ldots, f_n$ followed by Whitney moves $w_1,\ldots, w_n$ (with intermediate isotopies), and with labeling $L$.

Let $S$ be the surface obtained from $R'$ by performing only the finger moves $f_1,\ldots, f_n$, so $S$ is an immersed $2$-sphere in $X^4$ with $2n$ points of self-intersection.

Recall that $\gamma_i$ is the path along which the finger move $f_i$ takes place, where $\gamma_i$ is an arc in $X^4$ with $\boundary\gamma_i\subset R'$ and $\int{\gamma_i}\cap R'=\emptyset$ as in Definition \ref{fingerdef}. Recall also that $W_i$ is the Whitney disk associated to $w_i$, so that $W_i$ is a disk in $X^4$ with $\boundary W_i\subset S_i$ and $\int{W_i}\cap S_i=\emptyset$ as in Definition \ref{whitneydef}. Again, $S_i$ denotes the surface obtained from $S$ after performing Whitney moves $w_1,\ldots, w_{i-1}$.




Let $L=(L_x,L_y)$ be a labeling of $h$ as in Definition \ref{labeldef}, so $L_x=\{x_1,\ldots, x_n\}, L_y=\{y_1,\ldots, y_n\}$ are disjoint subsets of $A$ with $\pi_S(x_i)=\pi_S(y_i)$. The map $\pi_S^{-1}$ takes $S\cap($support of $f_i$)) to two disjoint disks; in the definition of a labeling we require that two points in $L_x$ be contained in one of these disks and two points in $L_y$ be in the other.



Let $S_+$ be the genus-$n$ {\emph{embedded}} surface in $X^4$ obtained from $S$ by attaching a tube $T_i$ between the two self-intersections of $S$ created by $f_i$, as in Figure \ref{fig:splus} (left two images). Specifically, if $\pi_S(x_j)$ and $\pi_S(x_k)$ are in the support of $\gamma_i$ (for $j\neq k$), fix an arc $\sigma_i$ in $A$ from $x_j$ to $x_k$. Take $\sigma_1,\ldots,\sigma_n$ to be disjoint. Then for $i=1,\ldots, n$, delete $\pi_S(\int{\nu}(y_i))$ from $S$. Attach $n$ tubes $T_1,\ldots, T_n$ to this bounded surface, with $T_i$ parallel to $\pi_S(\sigma_i)$.




\begin{figure}
\includegraphics[width=\textwidth]{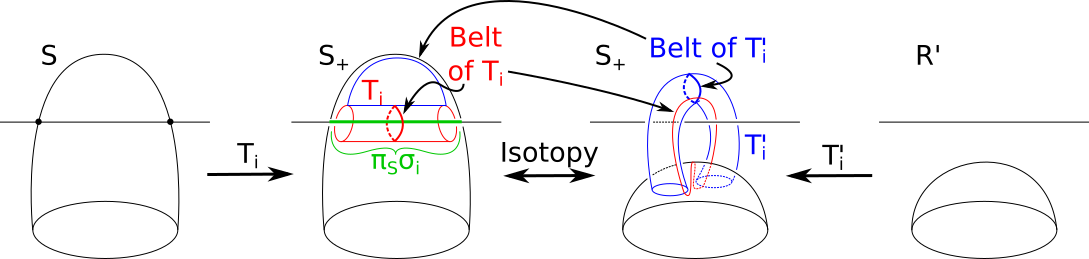}
\caption{{\bf{Left:}} the immersed surface $S$ is obtained from $R'$ by doing finger moves $f_1,\ldots, f_n$. {\bf{Second:}} We obtain $S_+$ from $S$ by surgery along tubes $T_1,\ldots, T_n$, where $T_i$ lies in the support of finger move $f_i$. {\bf{Third:}} We obtain $S_+$ from $R'$ by surgery along tubes $T'_i$, where $T'_i$ lies in the support of finger move $f_i$. {\bf{Right:}} The embedded surface $R'$.}\label{fig:splus}
\end{figure}

\begin{remark}
Although we have described $S_+$ as being obtained from $S$ by attaching tubes $T_i$ ($i=1,\ldots, n$), we can alternatively obtain $S_+$ from $R'$ by attaching tubes $T'_i$ ($i=1,\ldots, n$). See Figure \ref{fig:splus} (right two images). The tube $T'_i$ lies in the support of $\gamma_i$. For each $i=1,\ldots, n$, let $\newtilde{\gamma}_i:[0,1+\epsilon]$ be an extension of  $\gamma_i$, so that $\newtilde{\gamma}_i\vert_{[0,1]}=\gamma_i$, $\newtilde{\gamma}_i\vert_{(1,1+\epsilon]}\cap R'=\emptyset$, $\newtilde{\gamma}_i\cap\newtilde{\gamma_j}=\emptyset$ for $i\neq j$, and $\newtilde{\gamma}_i$ intersects $R'$ transversely at $\newtilde{\gamma}_i(1)$.

Let $D_i:=\newtilde{\gamma}_i\times I$ be contained in a small neighborhood of $\newtilde{\gamma}_i$, where the product direction is chosen so that $\newtilde{\gamma}_i(0)\times I\subset R'$ and $(\newtilde{\gamma}_i(0,1+\epsilon]\times I)\cap R'=\gamma_i(1)$. Let $\gamma'_i:=\boundary D_i\setminus(\newtilde{\gamma}_i(0)\times\int{I})$ and frame $\gamma'_i$ so that $C(0)$ and $C(1)$ are both contained in $R'$. Then let $T'_i$ be the cylinder from $C(0)$ to $C(1)$. We obtain $S_+$ from $R'$ by deleting the interiors of small disks bounded by $C(0)$ and $C(1)$ and then attaching $T'_i$, for $i=1,\ldots, n$.
\end{remark}

For each $\gamma_i$, let $H_i$ be a narrow solid tube $\gamma'_i\times D^2$, where the product direction is taken so that $\boundary H_i=(2$ disks in $R')\cup T'_i$. 
Let $M^3_1\subset X^4\times I$ be a cobordism from $R'$ to $S_+$ given by \[M^3_1=R'\times[0,1/2]\cup \sqcup_{i=1}^nH_i\times 1/2 \cup S_+\times [1/2,1].\]

%
%
%
%

We fix the above handle description of $M^3_1$, so that ``the $1$-handles of $M^3_1$'' will always refer to $H_1,\ldots, H_n$.


\begin{remark}\label{cancelremark}
Recall that $S_+$ is obtained from $S$ by attaching the tubes $T_1,\ldots, T_n$, where $T_i=\pi_S(\sigma_i)\times S^1$ for an arc $\sigma_i$ in $A$ from $x_{j}$ to $x_{k}$ (for some $j\neq k$). 
We refer to $\pi_S(\sigma_i(1/2))\times S^1$ as the {\emph{belt sphere}} of $T_i$. This belt intersects the belt sphere of the $1$-handle $H_i$ in exactly one point, and does not intersect the belt sphere of $H_l$ ($l\neq i$) at all. Then attaching $n$ $3$-dimensional $2$-handles to $M^3_1$ along annular neighborhoods of the belt spheres of $T_1,\ldots, T_n$ would geometrically cancel the $1$-handles of $M^3_1$.
\end{remark}


Recall that $\overline{w_i}$ is a finger move of $R$, and $\eta_i$ is the path along which this finger move takes place. Then $\eta_i$ is an arc in $X^4$ with $\boundary\eta_i\subset R$ and $\int{\eta}_i\cap R=\emptyset$. Take $z=R\cap G$ to be far from $\boundary\eta_i$ for each $i$.

Isotope and homotope $S_+$ and $S$ (respectively) near each Whitney disk $W_{i}$ as in Figure \ref{fig:crossediso} to be contained in a small neighborhood of $R\cup\eta_1\cup\cdots\cup\eta_n$. (Perform these moves in order of $i$. By dimensionality, $W_j$ is disjoint from tubes $\tilde{T}_k$ in the pictured support of $w_i$ for $i\le j$. In this support, the tubes $\tilde{T}_k$ that are not parallel to $R$ are parallel to $\eta_i$.) Call the resulting surfaces $\newtilde{S}_+$ and $\newtilde{S}$, respectively. 
The isotopy of $S_+$ to $\newtilde{S}_+$ takes tube $T_i$ to a tube $\newtilde{T}_i$. Now $\newtilde{S}_+$ is obtained from $\newtilde{S}$ by attaching tubes $\newtilde{T}_1,\ldots,\newtilde{T}_n$. The belt sphere of $T_i$ is carried to a curve $B_i$ on $\tilde{T}_i$. We call $B_i$ the belt sphere of $\tilde{T}_i$. Up to reparametrization of $\sigma_i$, $B_i$ bounds a disk perpendicular to $\tilde{S}_+$ which is centered at $b_i\subset\tilde{S}_+\cap R$. For $i\neq j$, take $b_i\neq b_j$.

\begin{figure}
\includegraphics[width=.5\textwidth]{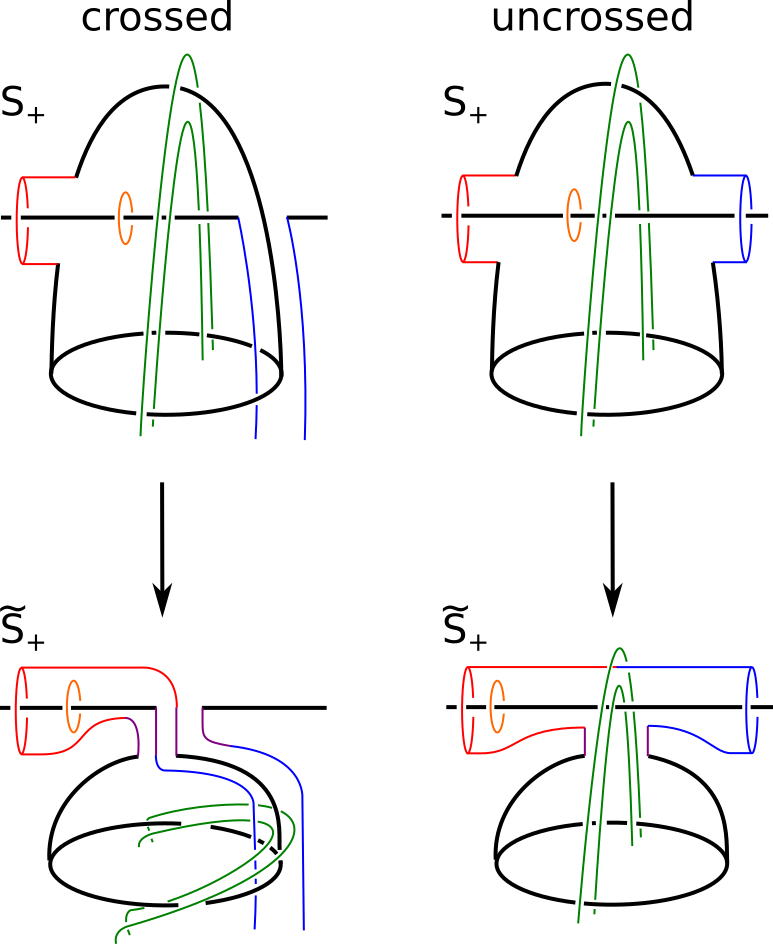}
\caption{{\bf{Top:}} the neighborhood of a crossed or uncrossed Whitney disk $W_i$. We draw $S_+\cap S$ in bold black, with the tubes $T_i$ in thin colored curves. In general, a tube $T_i$ may intersect this neighborhood many times. {\bf{Bottom:}} We isotope $S_+$ and homotope $S$ in a neighborhood of each $W_i$ to obtain $\newtilde{S}_+$ and $\newtilde{S}$, respectively. Now $\newtilde{S}_+\cap\newtilde{S}$ (in bold black) is contained in $R$.}
\label{fig:crossediso}
\end{figure}

For each $i=1,\ldots, n$, let $\alpha_i$ be an arc embedded in $R$ from $b_i$ to $z$. Take the arcs $\alpha_i,\alpha_j$ to be disjoint when $i\neq j$, and take $\alpha_i$ to be far from the endpoints of $\eta_1,\ldots,\eta_n$. Also take $\int{\alpha_i}\cap b_j=\emptyset$ for all $i$ and $j$.

 For $i=1,\ldots, n$, we now find a disk $\newtilde{C}_i$ whose boundary is the belt sphere $B_i$ of $\newtilde{T}_i$. See Figure \ref{fig:alpha} for an illustration of $\newtilde{C}_i$.

\begin{figure}
\includegraphics[width=\textwidth]{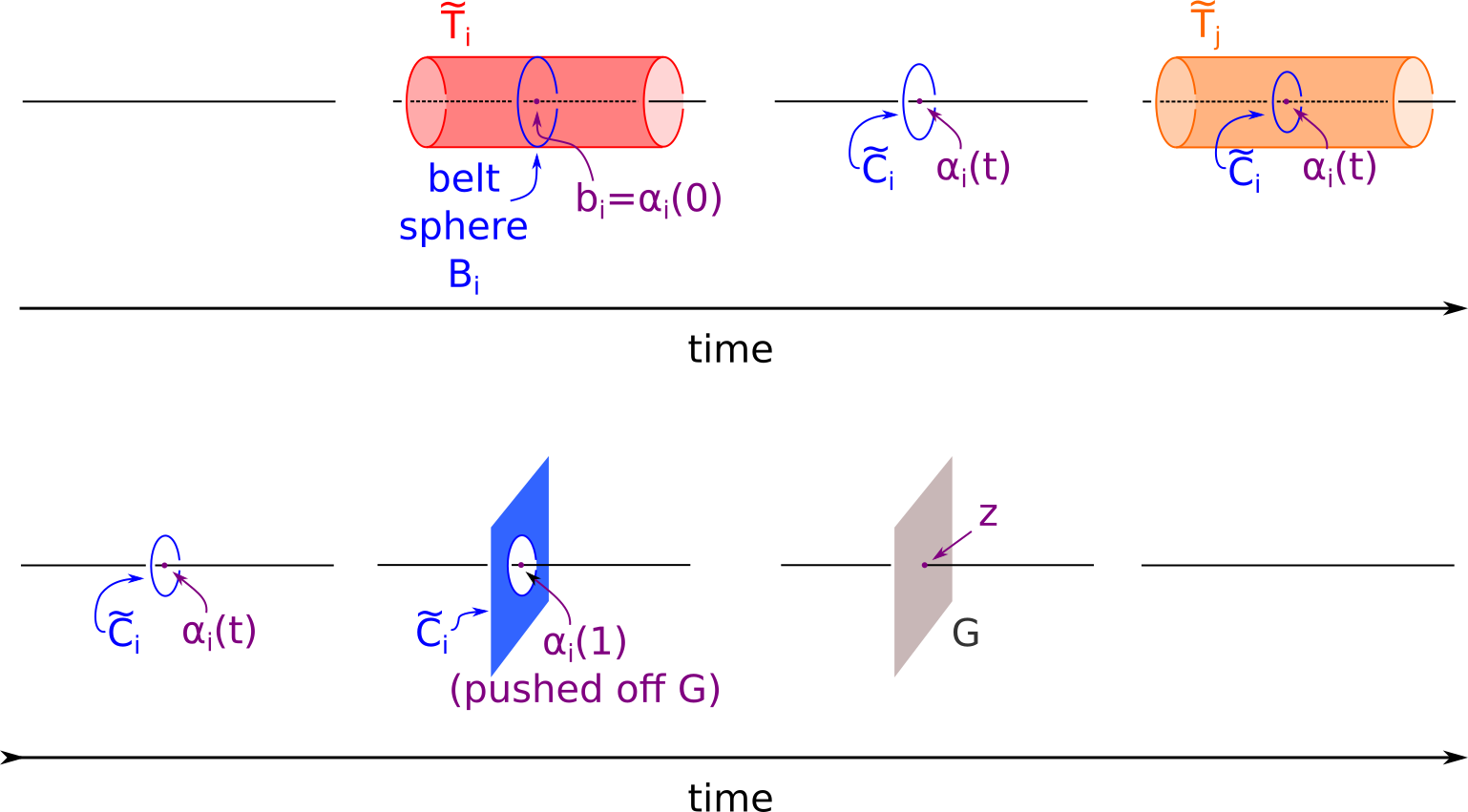}
\caption{The disk $C_i$ has boundary the belt sphere $B_i$ of $\newtilde{T}_i$, then follows the path of $\alpha_i(t)$ before being capped off by a disk in $G$. We push $C_1,\ldots, C_n$ off of $G$ and each other to obtain $\newtilde{C}_1,\ldots,\newtilde{C}_n$. Here, we draw a movie of $\newtilde{C}_i$. At each time slice, we draw a $3$-dimensional cross-section of $\nu(\alpha_i)$.}\label{fig:alpha}
\end{figure}

Let $T(\alpha_i)$ be a cylinder around $\alpha_i$, where $\alpha_i$ is framed so that $C(0)=B_i$ and $C(1)\subset G$. Let $C_i$ be the disk obtained by capping off $T(\alpha_i)$ with a disk in $G$ which does {\emph{not}} contain $z$. Take $T(\alpha_i)$ increasingly narrow so that $\int{C}_i$ does not intersect itself, $\newtilde{T}_j$ for any $j$, or $T(\alpha_k)$ for any $k\neq i$. The disks $C_1,\ldots, C_n$ all mutually intersect in a disk in $G$. Since $G$ has trivial normal bundle, we can push the disks $C_1,\ldots, C_n$ slightly off of $G$ in different directions to obtain disjoint disks $\newtilde{C}_1,\ldots,\newtilde{C}_n$ where $\boundary\newtilde{C}_i=\boundary C_i=B_i$. Note that the interior of $\newtilde{C}_i$ does not intersect $S_+$. 

%

Let $H'_i=\newtilde{C}_i\times I$, where the product direction is chosen so that $(\boundary\newtilde{C}_i)\times I\subset \newtilde{T}_i$. Let $R''$ be the sphere obtained from $S_+$ by compressing along each $\newtilde{C}_1,\ldots, \newtilde{C}_n$, so $\boundary(\newtilde{C}_i\times I)\subset S_+\cup R''$.

Let $\phi_s:X^4\to X^4\vert_{s\in [0,1]}$ be the ambient isotopy of $X^4$ taking $S_+$ to $\newtilde{S}_+$.

Let $M^3_2$ be a cobordism from $S_+$ to $R''$ in $X^4\times I$ given by \[M^3_2=\cup_{s\in[0,1]}\left(\phi_{s}(S_+)\times s/2\right)\sqcup_{i=1}^n (H'_i\times 1/2)\cup (R''\times[1/2,1]).\]
We illustrate $M^3_2$ in Figure \ref{fig:compresstubes}.

\begin{figure}
\includegraphics[width=\textwidth]{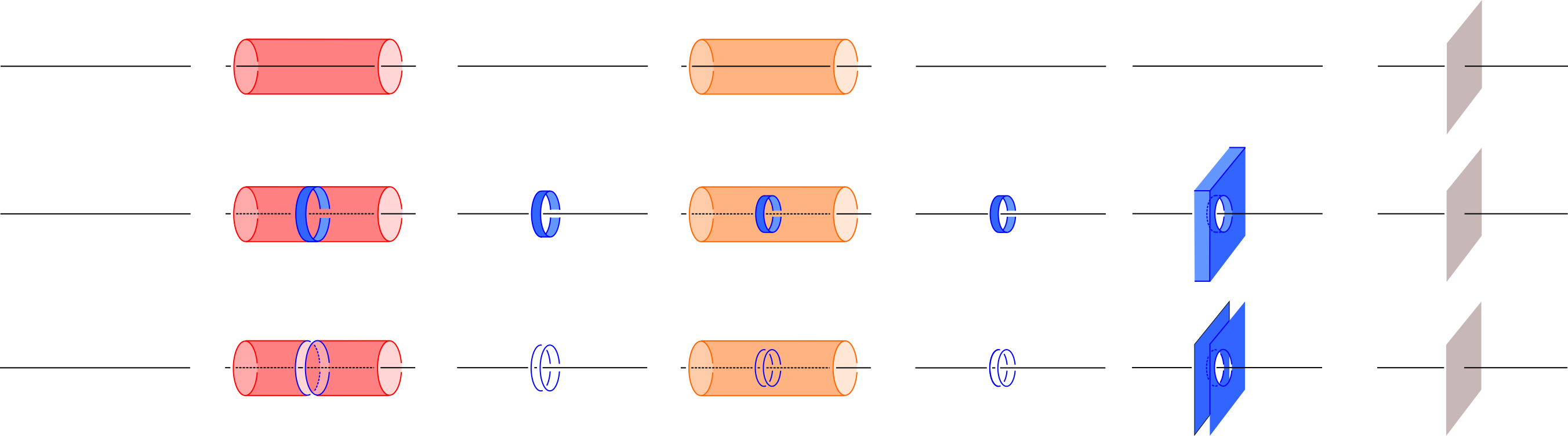}
\caption{The cobordism $M^3_2$ from $S_+$ to $R''$. {\bf{Top row:}} A movie of $\newtilde{S}_+$, as in Fig. \ref{fig:alpha}. This represents $M^3_2\cap(X^4\times (1/2-\epsilon))$. {\bf{Middle row:}} In blue, we draw the $2$-handle $H'_i$. The whole picture is $M^3_2\cap(X^4\times 1/2)$. {\bf{Bottom row:}} We compress $S_+$ along $\newtilde{C}_i$ to obtain $R''$. This is a movie of $M^3_2\cap(X^4\times (1/2+\epsilon))$.}\label{fig:compresstubes}
\end{figure}

Let $N^3$ be the cobordism from $R'$ to $R''$ in $X^4\times I$ obtained by concatenating $M^3_1$ and $M^3_2$. In words, $N^3$ is obtained from $M^3_1$ by attaching the $2$-handles $H'_1,\ldots, H'_n$. 
By Remark \ref{cancelremark}, these $2$-handles geometrically cancel the $1$-handles of $M^3_1$. Therefore, $N^3$ is a concordance from $R'$ to $R''$.


%
%
%
%

The sphere $R''$ intersects $G$ in exactly the point $z$. Now we will prove that $R''$ is isotopic to $R$, using the $4$-dimensional light bulb theorem.

%

\subsection{Proof that the concordance goes from $R'$ to $R$}\label{mainproof}

We will show that $R''$ is the realization of a tubed surface on $R$.




%

Recall that $S_+$ is obtained from $S$ by attaching tubes $T_1,\ldots, T_n$, where $T_i$ runs parallel to $\pi_S(\sigma_i)$ for an arc $\sigma_i$ between $x_j$ and $x_k$ (for some $j\neq k$), and that the isotopy $\phi_s$ from $S_+$ to $\newtilde{S}_+$ takes $T_i$ to tube $\newtilde{T}_i$. Let $\newtilde{B}_1,\ldots,\newtilde{B}_{2n}$  denote the components of $\newtilde{T}_1,\ldots,\newtilde{T}_n$ after compressing each $\newtilde{T}_i$ along $C_i$. Each $\newtilde{B}_i$ is a disk, and $R''$ is obtained from $\newtilde{S}$ by deleting disks bounded by $\boundary\newtilde{B}_i$ and then attaching $\newtilde{B}_i$ for each $i=1,\ldots, 2n$. (The disks deleted from $\newtilde{S}$ each meet a sheet of one of the $2n$ self-intersections of $\newtilde{S}$; the sphere $R''$ is embedded.)

Let $X_i=\phi_1($support of $w_i)$. See Figure \ref{fig:explainclean} (top three rows) for illustrations of $S$, $S_+$, and $R''$ in $X_i$.

%
%

For each $i=1,\ldots, n$, say $\pi_{S_i}^{-1}(W_i)=\boundary_i^1\cup\boundary_i^2$ where $\boundary_i^1$ and $\boundary_i^2$ are arcs in $A$. If $w_i$ is uncrossed, take $\boundary_i^2$ to have both endpoints in $L_y$. We perform the following operation to $R''$, illustrated in Figure \ref{fig:explainclean} (bottom). For $i=1,\ldots,n$:
\begin{itemize}
\item If $w_i$ is crossed, then take $R''\cap X_i$ as in Figure \ref{fig:explainclean} (third row, left).
\item If $w_i$ is uncrossed, suppose $\sigma_l$ crosses $\boundary_i^2$ for some $l$. (See Fig. \ref{fig:explainclean}, second row, rightmost.) Then some segment of $\newtilde{B}_r\cap X_i$ runs parallel to $\eta_i$, as in Figure \ref{fig:explainclean} (third row, third picture). For some $m\neq s$, $\newtilde{B}_m$ and $\newtilde{B}_s$ both have ends in $X_i$. Assume $r\neq s$ (by perhaps allowing $r=m$) and slide this segment of $\newtilde{B}_r$ over the disk $\newtilde{B}_s$ and out of $X_i$, as in Figure \ref{fig:explainclean} (third row, rightmost). Repeat for each intersection of a $\sigma_l$ curve (for any $l$) with $\boundary_i^2$.
\end{itemize}

Now we see that $R''$ is the realization of a tubed surface on $R$. The tube guide locus curves for $R''$ in $A$ are all of the form $\alpha_i,\beta_i$, and $\gamma_i$. Every $\newtilde{B}_j$ lies in a small neighborhood of $R$. Near the boundary of the disk $\newtilde{B}_j$, we find one of the two following situations:
\begin{itemize}
\item The ends of two $\newtilde{B}_j$'s join at a single tube parallel to $\eta_k$ where $w_k$ is uncrossed (recall from Remark \ref{fingerframed} that $\eta_k$ is a framed path).
\item The ends of two $\newtilde{B}_j$'s meet opposite ends of a double tube parallel to $\eta_k$ where $w_k$ is crossed.
\end{itemize}

Thus, the curves of the form $\tau_i$ for $R''$ are exactly $\{\eta_k\mid w_k$ uncrossed$\}$ while the curves of the form $\eta_i$ for $R''$ are exactly $\{\eta_k\mid w_k$ crossed.$\}$.

Assume $L$ is as in the hypothesis of Theorem \ref{maintheorem}. That is, each $2$-torsion element of $\pi_1(X^4,z)$ appears an even number of times in the multiset $\{[\eta_k]\mid w_k$ crossed$\}$. Then by Proposition \ref{tubeprop}, $R''$ is isotopic to $R$. Thus, $R'$ is concordant to $R$. This completes the proof of Theorem \ref{maintheorem} (and hence also Theorem \ref{easytheorem}). \qed

%
%
%
\begin{figure}
\includegraphics[width=\textwidth]{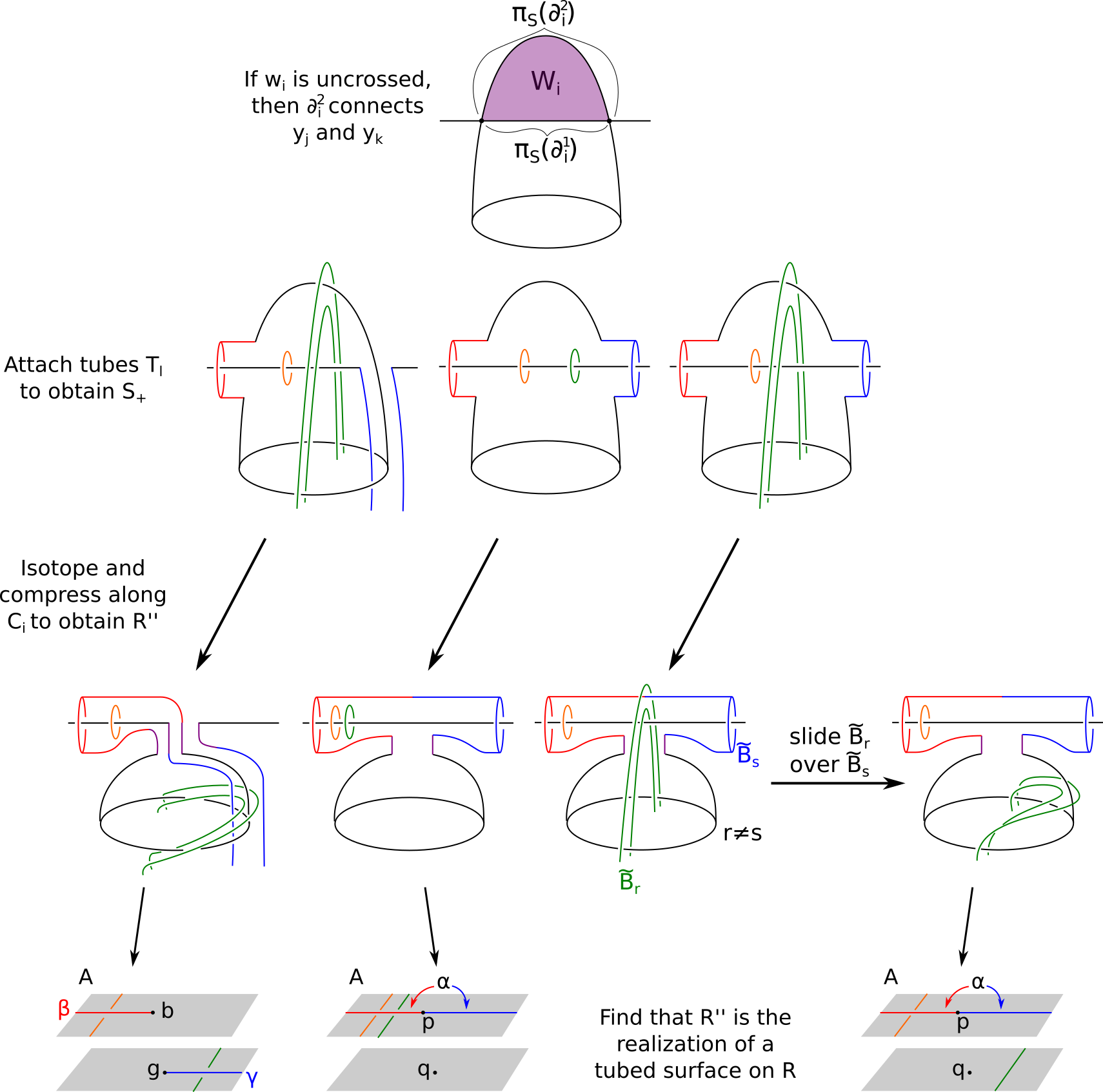}
\caption{{\bf{Top row:}} The Whitney disk $W_i$ with boundary in $S$. {\bf{Second row:}} we attach tubes $T_1,\ldots, T_n$ to $S$ to obtain $S_+$. In the leftmost picture, the Whitney move $w_i$ is crossed. In the middle and left pictures, $w_i$ is uncrossed. In the rightmost picture, some $\sigma_l$ intersects $\int{\boundary_i^2}$. (Here, the green tube is parallel to $\pi_S(\sigma_l))$. {\bf{Third row:}} We isotope $S_+$ to $\newtilde{S}_+$ and then compress along disks $C_i$ (not visible in this diagram) to obtain $R''$. In the third picture, some $\newtilde{B}_r$ corresponds to the previously pictured segment of $T_l$. There are two $\newtilde{B}_m,\newtilde{B}_s$ with ends pictured, with $m\neq s$. So without loss of generality, take $r\neq s$ and slide $\newtilde{B}_r$ over $\newtilde{B}_s$. {\bf{Bottom row:}} We find that $R''$ is the realization of a tubed surface on $R$. We give schematics for the tube guide locus arcs contained in $A$.}\label{fig:explainclean}
\end{figure}

\section{Concordance of surfaces of positive genus}

When $R$ and $R'$ are positive-genus surfaces rather than spheres, Gabai \cite{gabai} proves the following extension of the light bulb theorem.

\begin{theorem}[{\cite[Thm. 9.7]{gabai}}]\label{genus4dlight}
Let $X^4$ be an orientable $4$-manifold so that $\pi_1(X^4)$ has no $2$-torsion. Let $ R$ and $R'\subset X^4$ be orientable genus-$g$ surfaces embedded in $X^4$ so that $R$ and $R'$ have a mutual transverse sphere and $R'$ is homotopic to $R$. Moreover, assume the maps $\pi_1(R\setminus G)\to \pi_1(X^4\setminus G)$ and $\pi_1(R'\setminus G)\to \pi_1(X^4\setminus G)$ induced by inclusion are both trivial.

Then $R$ and $R'$ are isotopic.
\end{theorem}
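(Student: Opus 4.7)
The plan is to reduce the positive-genus statement to the sphere version (Theorem \ref{easy4dlight}) via a surgery construction, using the $\pi_1$ hypothesis to cap off the handles of $R$ and $R'$. By Hirsch's theorem (Theorem \ref{hirsch}), $R$ and $R'$ are regularly homotopic, so it suffices to upgrade a regular homotopy to an isotopy. Pick a standard system $c_1,\ldots,c_g$ of pairwise disjoint simple closed curves on $R\setminus G$ spanning a Lagrangian subspace of $H_1(R;\Z)$, so that surgery along them converts $R$ into a sphere. By the hypothesis that $\pi_1(R\setminus G)\to\pi_1(X^4\setminus G)$ is trivial, each $c_i$ is nullhomotopic in $X^4\setminus G$ and hence bounds an immersed disk there.

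The next step is to convert these immersed disks into disjoint framed embedded disks $D_1,\ldots,D_g$ with $\int{D_i}\cap(R\cup G)=\emptyset$. Self-intersections, mutual intersections, and intersections with $R$ of the capping disks can be removed by tubing into parallel copies of the transverse sphere $G$, which has trivial normal bundle and hence admits arbitrarily many parallel copies; framing discrepancies are corrected by boundary-twisting along $G$. Surger $R$ along $D_1,\ldots,D_g$ to obtain a sphere $\hat R\subset X^4$ that retains $G$ as a transverse sphere. The analogous construction applied to $R'$ produces a sphere $\hat R'$ also with transverse sphere $G$; by pushing the $D_i$ along the regular homotopy from $R$ to $R'$ and cleaning up intersections using $G$, we can arrange that $\hat R$ is homotopic to $\hat R'$. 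Theorem \ref{easy4dlight} then provides an ambient isotopy $\Phi_t$ of $X^4$ taking $\hat R$ to $\hat R'$.

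To recover the positive-genus isotopy, undo the surgery on $\Phi_1(\hat R)=\hat R'$ using the image capping data $\Phi_1(D_1),\ldots,\Phi_1(D_g)$. The resulting genus-$g$ surface is ambient isotopic to $R$ (via the inverse of $\Phi_t$, which also tracks the capping disks). A separate argument, invoking the $\pi_1$ hypothesis on $R'\setminus G$, shows that any two systems of framed embedded capping data for $\hat R'$ whose surgery-reversals yield homotopic positive-genus surfaces actually yield isotopic surfaces; this is plausible because the hypothesis guarantees that the two systems differ by finger moves and Whitney moves that can all be realized using $G$. This matches $\Phi_1(D_i)$ with an originally chosen system for $R'$ and produces the desired isotopy from $R$ to $R'$.

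The main obstacle is this final matching step: one must show that the capping data produced by the sphere-case isotopy is equivalent to a natural choice of capping data for $R'$. This is where the $\pi_1$ hypothesis on $R'\setminus G$ becomes essential, beyond its role in merely producing capping disks in the first place. A secondary obstacle is the earlier cleaning-up step converting immersed capping disks into embedded ones, since immersed disks in a general $4$-manifold cannot be embedded; the argument relies crucially on $G$ providing enough parallel copies to tube away all self- and sheet-intersections, and on the framing corrections not accidentally reintroducing intersections with $R$ elsewhere.
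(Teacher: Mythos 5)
The paper does not prove this statement: Theorem~\ref{genus4dlight} is quoted verbatim from Gabai~\cite{gabai}, and the present article uses it as a black box (to prove Theorem~\ref{genustheorem}). So there is no internal proof to compare against, and your proposal must be judged on its own merits.

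The central problem with the proposal is that the ``matching step'' you flag is not a secondary detail to be supplied later --- it is logically equivalent to the theorem itself, so the surgery-to-spheres reduction does not actually make progress. Concretely: once you surger $R$ along $D_1,\ldots,D_g$ to $\hat R$, surger $R'$ along $D'_1,\ldots,D'_g$ to $\hat R'$, and apply Theorem~\ref{easy4dlight} to get an ambient isotopy $\Phi_t$ carrying $\hat R$ to $\hat R'$, then desurgering $\hat R'$ along $\Phi_1(D_i)$ recovers $\Phi_1(R)\cong R$, whereas desurgering along $D'_i$ recovers $R'$. What remains is precisely the claim that two systems of framed embedded caps on the \emph{same} sphere $\hat R'$ (both disjoint from $G$, both yielding $\pi_1$-trivial desurgeries) whose desurgeries are homotopic genus-$g$ surfaces in fact have isotopic desurgeries. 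But every homotopic pair $R,R'$ of genus-$g$ surfaces with mutual transverse sphere $G$ and the $\pi_1$-triviality hypothesis arises exactly this way (surger each, then use the sphere case to identify the resulting spheres), so your unproved ``separate argument'' \emph{is} the theorem. Asserting it is ``plausible because the two systems differ by finger and Whitney moves realizable using $G$'' is exactly the nontrivial geometric input that Gabai's Section~9 supplies and that your sketch does not.

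A secondary gap is the cleanup of the immersed caps. The Norman trick (tubing into parallel copies of $G$) removes intersections of a cap $D_i$ with $R$ or with another cap, but \emph{self}-intersections of $D_i$ require tubing one local sheet along a path out to a parallel push-off of $G$; that detour can re-intersect $R$, the other caps, or the tube guides you already built, and the framing correction by boundary twisting reintroduces intersections with $R$. None of this is fatal, but it requires an explicit ordering of operations and bookkeeping that the proposal does not provide. You should also note that ``push the $D_i$ along the regular homotopy'' to see $\hat R$ and $\hat R'$ are homotopic is itself not automatic: the regular homotopy from $R$ to $R'$ need not carry your chosen caps to caps disjoint from $G$, and making this precise is again a place where the $\pi_1$ hypothesis must be used, not merely invoked.
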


Note that when $g=0$, Theorem \ref{genus4dlight} specializes to Theorem \ref{easy4dlight}.

The analogous extension of Theorem \ref{easytheorem} is thus as follows.

\begin{theorem}\label{genustheorem}
Let $X^4$ be an orientable $4$-manifold so that $\pi_1(X^4)$ has no $2$-torsion. Let $ R$ and $R'\subset X^4$ be orientable genus-$g$ surfaces embedded in $X^4$ so that $R$ has a mutual transverse sphere and $R'$ is homotopic to $R$. Moreover, assume the map $\pi_1(R\setminus G)\to \pi_1(X^4\setminus G)$ induced by inclusion is trivial.

Then $R$ and $R'$ are concordant.
\end{theorem}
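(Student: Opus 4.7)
The plan is to repeat the proof of Theorem \ref{easytheorem} essentially verbatim, with only the modifications forced by positive genus. First, I would use Hirsch's Theorem \ref{hirsch} in place of Smale's Theorem \ref{smale} to promote the homotopy from $R'$ to $R$ to a regular homotopy $h$ consisting of finger moves $f_1,\ldots,f_n$ followed by Whitney moves $w_1,\ldots,w_n$ (with intermediate isotopies), and fix any labeling $L$ of $h$. Since $\pi_1(X^4)$ has no $2$-torsion, the multiset $\mathcal{H}=\{[\eta_i]\mid w_i \text{ is crossed w.r.t. } L\}$ contains no $2$-torsion elements, so the even-multiplicity condition on $2$-torsion elements of $\pi_1(X^4)$ is vacuously satisfied.

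Next, I would construct the cobordism $N^3$ from $R'$ to a target surface $R''$ by executing the construction of Section \ref{easyproof} without change: attach the $1$-handles $H_i$ along the finger-move arcs $\gamma_i$ to obtain the embedded genus-$(g+n)$ surface $S_+$; isotope $S_+$ to $\newtilde{S}_+$ in a neighborhood of $R\cup\eta_1\cup\cdots\cup\eta_n$; for each $i$, choose an arc $\alpha_i\subset R$ from the belt basepoint $b_i$ to $z=R\cap G$ (with the $\alpha_i$ mutually disjoint and disjoint from the $\eta_j$ endpoints), build the disk $\newtilde{C}_i$ by tubing along $\alpha_i$ and capping off with a pushed-off copy of $G\setminus\nu(z)$, and compress $\newtilde{S}_+$ along $\newtilde{C}_1,\ldots,\newtilde{C}_n$ to obtain the genus-$g$ surface $R''$. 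By Remark \ref{cancelremark}, the $2$-handles $\newtilde{C}_i\times I$ geometrically cancel the $1$-handles $H_i$, so $N^3$ is a concordance from $R'$ to $R''$.

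Repeating the argument of Section \ref{mainproof} verbatim then shows that $R''$ is the realization of a tubed surface on $R$, whose tube guide locus consists of $\alpha_i, \beta_i, \gamma_i$ arcs, with single tubes coming from uncrossed Whitney moves and double tubes coming from crossed ones. To conclude, I would invoke the positive-genus analogue of Proposition \ref{tubeprop} that underlies Gabai's proof of Theorem \ref{genus4dlight}: provided both the $2$-torsion condition on $\mathcal{H}$ and the triviality of $\pi_1(R\setminus G)\to\pi_1(X^4\setminus G)$ hold, the realization of a tubed surface on $R$ is ambiently isotopic to $R$. The main obstacle is precisely this invocation, since in positive genus the arcs $\alpha_i$ may wrap around handles of $R$, so without the $\pi_1(R\setminus G)\to\pi_1(X^4\setminus G)$ hypothesis the resulting tubes could represent nontrivial elements of $\pi_1(X^4\setminus G)$ and thereby obstruct the isotopy. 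Note that we only require this $\pi_1$ condition on $R$, not on $R'$ (in contrast to Theorem \ref{genus4dlight}), because the tubed surface structure we build lives on $R$ rather than on $R'$; the only role played by $R'$ in the argument is as the initial surface of the regular homotopy $h$, which is furnished by Hirsch's theorem without any extra hypothesis.
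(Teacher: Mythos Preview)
Your proposal is correct and follows the same construction as the paper: invoke Hirsch to get a regular homotopy, run the Section~\ref{easyproof} construction verbatim to produce a concordance from $R'$ to a surface $R''$ which is the realization of a tubed surface on $R$, and then argue $R''$ is isotopic to $R$.

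The only difference is in how you justify the final isotopy $R''\cong R$. You appeal directly to a positive-genus analogue of Proposition~\ref{tubeprop} extracted from the internals of Gabai's proof of Theorem~\ref{genus4dlight}. The paper instead applies Theorem~\ref{genus4dlight} itself as a black box to the pair $(R,R'')$, which forces two small extra verifications: that $R''$ is homotopic to $R$ (argued by observing that the cancelling $1$- and $2$-handle pairs form immersed $3$-balls meeting $R'$ in disks, so the surgery is homotopically trivial), and that $\pi_1(R''\setminus G)\to\pi_1(X^4\setminus G)$ is trivial (argued by pushing any loop on $R''$ off the tubes and into $R$). Your route is slightly more economical because the tubed-surface structure on $R$ makes both of these facts automatic, which is why you only need the $\pi_1$ hypothesis on $R$; the paper's route is more self-contained because it cites only the stated theorem rather than a lemma buried in its proof.
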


\begin{proof}
%
%

By Theorem \ref{hirsch}, $R$ and $R'$ are regularly homotopic. We repeat the argument of Theorem \ref{easytheorem} to construct an embedded surface $R''$ which is the realization of a tubed surface on $R$ so that $R''$ is concordant to $R'$ and $R\cap G=\pt$. (In exactly the same fashion as in Theorem \ref{easytheorem}, we build a concordance from $R$ to $R''$ by attaching a $1$-handle to $R\times I$ for each finger move in the regular homotopy, and then attach cancelling $2$-handles using the transverse sphere $G$.)

Note that $R''$ is built from $R'$ by surgery along immersed $3$-balls ($1$- and $2$-handle pairs) which meet $R'$ in a disk and can thus be homotoped to be trivial (see Figure \ref{fig:homotopy} for an illustration). Therefore, $R''$ is homotopic to $R'$ and hence $R$. Moreover, every loop in $R''$ can be isotoped off the tubes attached to $R$ and into $R$ itself, so the map $\pi_1(R''\setminus G)\to \pi_1(X^4\setminus G)$ induced by inclusion is trivial. Then by Theorem \ref{genus4dlight}, $R''$ is isotopic to $R$, so $R'$ is concordant to $R$.

\begin{figure}
\includegraphics[width=\textwidth]{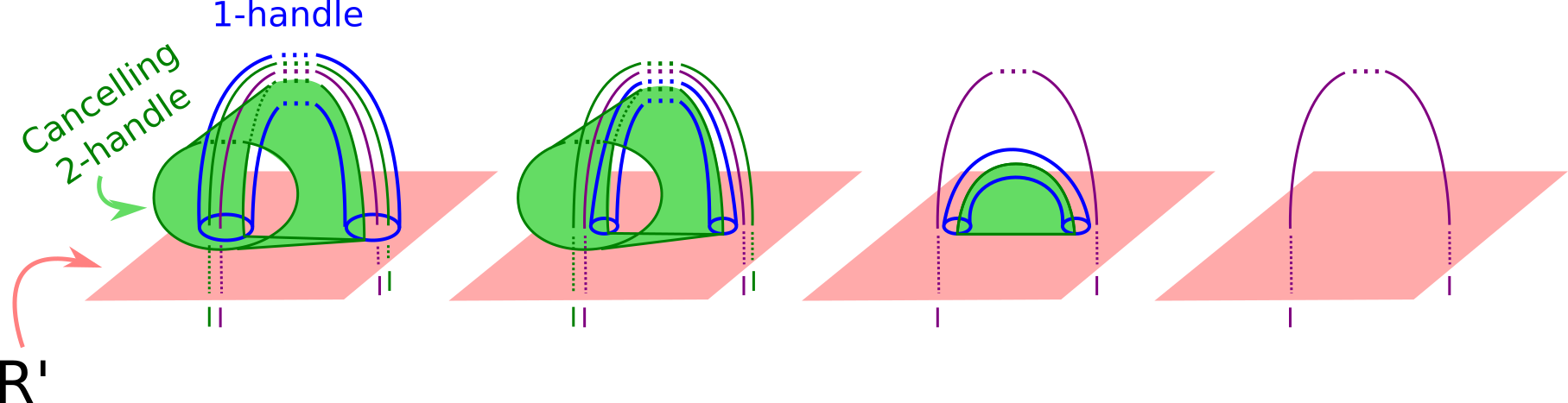}
\caption{Left to right: Part of a homotopy from $R''$ to $R$. Here we draw $R'$ and a schematic of a $1$-handle and the (collar of a) core of the $2$-handle which geometrically cancels it, projected to one $X^4\times t$. The cancelling $2$-handle and other $2$-handle may intersect the $1$-handle. The $1$-handle and cancelling $2$-handle together form an immersed $3$-ball, which we shrink over time during the homotopy. To obtain $R'$, we repeat for each $1$-handle of $M^3_1$.}\label{fig:homotopy}
\end{figure}

\end{proof}

\end{document}